\newtheorem{thm}{Theorem}[section]
\newtheorem{lem}[thm]{Lemma}
\newtheorem{cor}[thm]{Corollary}
\newtheorem{prop}[thm]{Proposition}
\theoremstyle{definition}
\newtheorem{definition}[thm]{Definition}
\theoremstyle{remark}
\numberwithin{equation}{section}
\begin{document}
\normalfont
\newcommand{\thmref}[1]{Theorem~\ref{#1}}
\newcommand{\secref}[1]{Section~\ref{#1}}
\newcommand{\lemref}[1]{Lemma~\ref{#1}}
\newcommand{\propref}[1]{Proposition~\ref{#1}}
\newcommand{\corref}[1]{Corollary~\ref{#1}}
\newcommand{\remref}[1]{Remark~\ref{#1}}
\newcommand{\eqnref}[1]{(\ref{#1})}
\newcommand{\exref}[1]{Example~\ref{#1}}

\newcommand{\nc}{\newcommand}

\nc{\on}{\operatorname}

\nc{\Z}{{\mathbb Z}}
\nc{\C}{{\mathbb C}}
\nc{\R}{{\mathbb R}}
\nc{\bbP}{{\mathbb P}}
\nc{\bF}{{\mathbb F}}

\nc{\boldD}{{\mathbb D}}
\nc{\oo}{{\mf O}}
\nc{\N}{{\mathbb N}}
\nc{\bib}{\bibitem}
\nc{\pa}{\partial}
\nc{\F}{{\mf F}}
\nc{\CA}{{\mathcal A}}
\nc{\CE}{{\mathcal E}}
\nc{\CP}{{\mathcal P}}
\nc{\CO}{{\mathcal O}}
\nc{\CK}{{\mathcal K}}
\nc{\Ann}{\text{Ann}}
\nc{\Rad}{\text{Rad}}
\nc{\Res}{\text{Res}}
\nc{\Ind}{\text{Ind}}
\nc{\Ker}{\text{Ker}}
\nc{\id}{\text{id}}

\nc{\be}{\begin{equation}}
\nc{\ee}{\end{equation}}

\nc{\rarr}{\rightarrow}
\nc{\larr}{\longrightarrow}
\nc{\al}{\alpha}
\nc{\ri}{\rangle}
\nc{\lef}{\langle}

\nc{\W}{{\mc W}}
\nc{\gam}{\ol{\gamma}}
\nc{\Q}{\ol{Q}}
\nc{\q}{\widetilde{Q}}
\nc{\la}{\lambda}
\nc{\ep}{\epsilon}

\nc{\g}{\mf g}
\nc{\h}{\mf h}
\nc{\n}{\mf n}
\nc{\bb}{\mf b}
\nc{\G}{{\mf g}}

\nc{\D}{\mc D}
\nc{\cE}{\mc E}
\nc{\CC}{\mc C}
\nc{\CH}{\mc H}
\nc{\CT}{\mc T}
\nc{\CI}{\mc I}
\nc{\CR}{\mc R}

\nc{\UK}{{\mc U}_{\CA_q}}

\nc{\CS}{\mc S}

\nc{\CB}{\mc B}

\nc{\Li}{{\mc L}}
\nc{\La}{\Lambda}
\nc{\is}{{\mathbf i}}
\nc{\V}{\mf V}
\nc{\bi}{\bibitem}
\nc{\NS}{\mf N}
\nc{\dt}{\mathord{\hbox{${\frac{d}{d t}}$}}}
\nc{\E}{\mc E}
\nc{\ba}{\tilde{\pa}}
\nc{\half}{\frac{1}{2}}

\def\smapdown#1{\big\downarrow\rlap{$\vcenter{\hbox{$\scriptstyle#1$}}$}}

\nc{\mc}{\mathcal}
\nc{\ov}{\overline}
\nc{\mf}{\mathfrak}
\nc{\ol}{\fracline}
\nc{\el}{\ell}
\nc{\etabf}{{\bf \eta}}
\nc{\zetabf}{{\bf
\zeta}}\nc{\x}{{\bf x}}
\nc{\xibf}{{\bf \xi}} \nc{\y}{{\bf y}}
\nc{\WW}{\mc W}
\nc{\SW}{\mc S \mc W}
\nc{\sd}{\mc S \mc D}
\nc{\hsd}{\widehat{\mc S\mc D}}
\nc{\parth}{\partial_{\theta}}
\nc{\cwo}{\C[w]^{(1)}}
\nc{\cwe}{\C[w]^{(0)}} \nc{\wt}{\widetilde}
\nc{\gl}{\mf gl}
\nc{\K}{\mf k}

\newcommand{\U}{{\rm{U}}}
\newcommand{\End}{{\rm{End}}}
\newcommand{\Hom}{{\rm{Hom}}}
\newcommand{\Lie}{{\rm{Lie}}}
\newcommand{\Uq}{{{\rm U}_q}}
\newcommand{\GL}{{\rm{GL}}}
\newcommand{\Sym}{{\rm{Sym}}}
\newcommand{\rk}{{\rm{rk}}}
\newcommand{\tr}{{\rm{tr}}}
\newcommand{\Rea}{{\rm{Re}}}
\newcommand{\rank}{{\rm{rank}}}
\newcommand{\im}{{\rm{Im}}}

\advance\headheight by 2pt

\nc{\fb}{{\mathfrak b}}
\nc{\fg}{{\mathfrak g}}

\nc{\fh}{{\mathfrak h}}
\nc{\fk}{{\mathfrak k}}

\nc{\fl}{{\mathfrak l}}
\nc{\fn}{{\mathfrak n}}

\nc{\fp}{{\mathfrak p}}
\nc{\fu}{u}


\nc{\fS}{{\Sym}}

\nc{\fsl}{{\mathfrak {sl}}}
\nc{\fsp}{{\mathfrak {sp}}}
\nc{\fso}{{\mathfrak {so}}}
\nc{\fgl}{{\mathfrak {gl}}}

\nc{\A}{\mc A} \nc{\cF}{{\mathcal F}}

\nc{\cA}{{\mathcal A}}
\nc{\cP}{{\mathcal P}}
\nc{\cC}{{\mathcal C}}
\nc{\cU}{{\mathcal U}}
\nc{\cB}{{\mathcal B}}

\def\lr{{\longrightarrow}}
\def\inv{{^{-1}}}

\def\xl{{\hbox{\lower 2pt\hbox{$\scriptstyle \mathfrak L$}}}}

\nc{\bX}{{\mathbf X}} \nc{\bx}{{\mathbf x}} \nc{\bd}{{\mathbf d}}
\nc{\bdim}{{\mathbf dim}} \nc{\bm}{{\mathbf m}}

\title[On endomorphisms of quantum tensor space]{On endomorphisms of quantum tensor space}

\author{G.I. Lehrer and R.B. Zhang}
\address{School of Mathematics and Statistics,
University of Sydney, N.S.W. 2006, Australia}
\email{gusl@maths.usyd.edu.au, rzhang@maths.usyd.edu.au}
\date {23rd June, 2008}

\begin{abstract} We give a presentation of the endomorphism algebra
$\End_{\cU_q(\fsl_2)}(V^{\otimes r})$, where $V$ is the
$3$-dimensional irreducible module for quantum $\fsl_2$ over the
function field $\C(q^{\frac{1}{2}})$. This will be as a quotient of
the Birman-Wenzl-Murakami algebra
$BMW_r(q):=BMW_r(q^{-4},q^2-q^{-2})$ by an ideal generated by a
single idempotent $\Phi_q$. Our presentation is in analogy with the
case where $V$ is replaced by the $2$- dimensional irreducible
$\cU_q(\fsl_2)$-module, the BMW algebra is replaced by the Hecke
algebra $H_r(q)$ of type $A_{r-1}$, $\Phi_q$ is replaced by the
quantum alternator in $H_3(q)$, and the endomorphism algebra is the
classical realisation of the Temperley-Lieb algebra on tensor space.
In particular, we show that all relations among the endomorphisms
defined by the $R$-matrices on $V^{\otimes r}$ are consequences of
relations among the three $R$-matrices acting on $V^{\otimes 4}$.
The proof makes extensive use of the theory of cellular algebras.
Potential applications include the decomposition of tensor powers
when $q$ is a root of unity.
\end{abstract}
\maketitle

\section{Introduction and statement of results}
Let $q^{\frac{1}{2}}$ be an indeterminate over $\C$.
Our objective is to give a presentation of
$\End_{\cU_q(\fsl_2)}(V_q^{\otimes r})$, where $V_q$ is the
irreducible $3$-dimensional representation of quantum $\fsl_2$. This
presentation will be as a quotient of the Birman-Murakami-Wenzl
algebra $BMW_r(q):=BMW_r(q^{-4},q^2-q^{-2})$ by an ideal generated
by a single quasi-idempotent $\Phi_q$. The endomorphism algebra
therefore has the same presentation as $BMW_r(q)$, with the
additional relation $\Phi_q=0$. This paper is a sequel to
\cite{LZ2}, where the main results were conjectured, and by and
large we maintain the notation of that work. Our results may be
stated integrally, i.e. in terms of algebras over the ring
$\C[q^{\pm \frac{1}{2}}]$, and they therefore have the potential to
generalise to the situation where $q$ is a root of unity. We intend
to address that issue in a future work.

\subsection{General notation.}
Denote the function field $\C(q^{\frac{1}{2}})$ by $\CK$. Let
$\cU_q=\cU_q(\fsl_2)$ be the quantised universal enveloping algebra
of $\fsl_2$ over $\CK$, and write $V_q$ for the 3-dimensional
irreducible $\cU_q$ module. More generally, write $V(d)_q$ for the
irreducible $\cU_q$-module with highest weight $d\in\Z_{\geq 0}$, so
that $V_q=V(2)_q$. It is well known that there is a homomorphism

\be\label{braidhom}
\eta:\CK \CB_r\lr \End_{\cU_q}(V(d)_q^{\otimes r}):=E_q(d,r),
\ee
where $\CB_r$ is the $r$-string braid group, and $\eta$ maps the
$i^{\text{th}}$ generator of $\CB_r$ to the $R$-matrix acting on
the $i^{\text{th}}$ and $(i+1)^{\text{st}}$ factors of the tensor
power (cf. \cite[\S\S 3,4]{LZ2}).

It was shown in \cite{LZ1} that $\eta$ is surjective for all $d$ and $r$.
It is our purpose
to use this to give an explicit presentation of $E_q(2,r)$, analogous to the
celebrated presentation of the Temperley-Lieb algebra $E_q(1,r)$ as a quotient
of the Hecke algebra of type $A$ (\cite[Theorem 3.5]{LZ2}). The first step
is to identify a finite dimensional quotient of $\CK\CB_r$ through which
$\eta$ factors.

\subsection{The algebra $BMW_r(\CK)$.} It was shown in \cite[Theorem 4.4]{LZ2}
that $\eta$ factors through the finite dimensional algebra $BMW_r(\CK)$,
which we now proceed to define.

Let $\CA_{y,z}$ be the ring $\C[y^{\pm 1},z]$, where $y,z$ are
indeterminates. The BMW algebra $BMW_r(y,z)$ over $\CA_{y,z}$ is the
associative $\CA_{y,z}$-algebra with generators $g_1^{\pm
1},\dots,g_{r-1}^{\pm 1}$ and $e_1,\dots,e_{r-1}$, subject to the
following relations:

The braid relations for the $g_i$:
\begin{equation}\label{braidgi}
\begin{aligned}
g_ig_j&=g_jg_i\text{ if }|i-j|\geq 2\\
g_ig_{i+1}g_i&=g_{i+1}g_ig_{i+1} \text{ for }1\leq i\leq r-1;\\
\end{aligned}
\end{equation}
The Kauffman skein relations:
\begin{equation}\label{kauffman}
g_i-g_i^{-1}=z(1-e_i)\text { for all }i;
\end{equation}
The de-looping relations:
\begin{equation}\label{delooping}
\begin{aligned}
&g_ie_i=e_ig_i=ye_i;\\\
&e_ig_{i-1}^{\pm 1}e_i=y^{\mp 1}e_i;\\
&e_ig_{i+1}^{\pm 1}e_i=y^{\mp 1}e_i.\\
\end{aligned}
\end{equation}

The next five relations are easy consequences of the previous three.
\begin{eqnarray}
&&e_ie_{i\pm 1}e_i=e_i; \label{bmwtl}\\
&&(g_i-y)(g_i^2-zg_i-1)=0; \label{cubic}\\
&&ze_i^2=(z+y^{-1} -y)e_i,\quad  
\label{esquared}\\
&&-yze_i=g_i^2-zg_i-1; \label{equadg}\\
&&yzg_ie_{i+1}e_i=zg_{i+1}^{-1} e_i. \label{combeg}
\end{eqnarray}

It is easy to show that $BMW_r(y,z)$ may be defined using the
relations (\ref{braidgi}), (\ref{delooping}), (\ref{cubic}) and
(\ref{equadg}) instead of (\ref{braidgi}), (\ref{kauffman}) and
(\ref{delooping}), i.e. that (\ref{kauffman}) is a consequence of
(\ref{cubic}) and (\ref{equadg}).

We shall require a particular specialisation of $BMW_r(y,z)$
to a subring $\CA_q$ of $\CK$, which is defined as follows.
Let $\CS$ be the multiplicative subset of
$\C[q, q^{-1}]$
generated by $[2]_q$, $[3]_q$ and $[3]_q-1$. Let $\CA_q:=\C[q,
q^{-1}]_\CS:=\C[q,q\inv, [2]_q\inv,[3]_q\inv, (q^2+q^{-2})\inv]$
be the localisation of $\C[q, q^{-1}]$ at $\CS$.

Now let $\psi:\C[y^{\pm 1},z]\lr\CA_q$ be the homomorphism
defined by $y\mapsto q^{-4}$, $z\mapsto q^2-q^{-2}$. Then
$\psi$ makes $\CA_q$ into an $\CA_{y,z}$-module, and the
specialisation $BMW_r(q):=\CA_q\otimes_{\CA_{y,z}} BMW_r(y,z)$
is the $\CA_q$-algebra with generators which we denote, by abuse of
notation, $g_i^{\pm 1},e_i$ ($i=1,\dots,r-1$) and relations
(\ref{braidgiq}) below, with the relations (\ref{extrarel}) being
consequences of (\ref{braidgiq}).

\begin{equation}\label{braidgiq}
\begin{aligned}
g_ig_j&=g_jg_i\text{ if }|i-j|\geq 2\\
g_ig_{i+1}g_i&=g_{i+1}g_ig_{i+1} \text{ for }1\leq i\leq r-1\\
g_i-g_i^{-1}&=(q^2-q^{-2})(1-e_i)\text { for all }i\\
g_ie_i&=e_ig_i=q^{-4}e_i\\
e_ig_{i-1}^{\pm 1}e_i&=q^{\pm 4}e_i\\
e_ig_{i+1}^{\pm 1}e_i&=q^{\pm 4}e_i.\\
\end{aligned}
\end{equation}

\begin{equation}\label{extrarel}
\begin{aligned}
e_ie_{i\pm 1}e_i&=e_i  \\
(g_i-q^2)(g_i+q^{-2})&=-q^{-4}(q^2-q^{-2})e_i\\
(g_i-q^{-4})(g_i-q^2)(g_i+q^{-2})&=0\\
e_i^2&=(q^2+1+q^{-2})e_i\\
g_{i+1}^{-1} e_i&=q^{-4}g_ie_{i+1}e_i.\\
\end{aligned}
\end{equation}

We shall be concerned with the following two specialisations
of $BMW_r(q)$.

\begin{definition}\label{spec}  Let $\phi_q:\CA_q\lr \CK=\C(q^{\frac{1}{2}})$ be the
inclusion map, and let $\phi_1:\CA_q\lr\C$ be the $\C$-algebra homomorphism
defined by $q\mapsto 1$. Define the specialisations
$BMW_r(\CK):=\CK\otimes_{\phi_q}BMW_r(q)$, and
$BMW_r(1):=\C\otimes_{\phi_1}BMW_r(q)$.
\end{definition}

The following facts are proved in \cite[Lemma 4.2, Theorem
4.4]{LZ2}.
\begin{prop}\label{bmwfact}
\begin{enumerate}
\item The surjective homomorphism $\eta$ of (\ref{braidhom}) factors through
$BMW_r(\CK)$. That is, there is a surjective homomorphism
$$
\eta_q:BMW_r(\CK)\lr E_r(2,q),
$$
in which the generators $g_i$ are mapped to the $R$ matrices on factors $i,i+1$.
\item The specialisation $BMW_r(1)$ is isomorphic to the Brauer algebra $B_r(3)$.
\end{enumerate}
\end{prop}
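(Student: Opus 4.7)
For part (1), the task is to show that the image $\eta(\CK\CB_r)\subset E_q(2,r)$ can be extended to an algebra homomorphism from $BMW_r(\CK)$ satisfying the defining relations (\ref{braidgiq}). The key input is the $\cU_q$-module decomposition
\[
V_q\otimes V_q\;\cong\;V(4)_q\oplus V(2)_q\oplus V(0)_q,
\]
which is multiplicity-free. By Schur's lemma the $R$-matrix $\check R$ acts as a scalar on each summand, and an explicit calculation (for instance from the universal $R$-matrix of $\cU_q(\fsl_2)$, or from the action of the quantum Casimir) shows these three scalars to be $q^{-4}$, $q^{2}$ and $-q^{-2}$. This immediately forces the cubic relation $(g_i-q^{-4})(g_i-q^2)(g_i+q^{-2})=0$ on $\eta(g_i)$, and it motivates defining the image of $e_i$ by
\[
\tilde e_i:=-q^{4}(q^2-q^{-2})^{-1}\bigl(\eta(g_i)^2-(q^2-q^{-2})\eta(g_i)-1\bigr),
\]
in accordance with (\ref{equadg}). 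Up to the scalar $[3]_q$, $\tilde e_i$ is then the orthogonal projector onto the trivial summand $V(0)_q$ in factors $i,i+1$.

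With these choices the relations (\ref{braidgiq}) can be verified in sequence. The braid relations come from the functoriality of $\check R$; the Kauffman skein relation (\ref{kauffman}) is equivalent to (\ref{equadg}) and so follows from the cubic plus the definition of $\tilde e_i$; and the identities $g_i\tilde e_i=\tilde e_ig_i=q^{-4}\tilde e_i$ simply reflect that $\tilde e_i$ projects onto the $q^{-4}$-eigenspace of $\eta(g_i)$. The only genuinely non-formal step is the pair of de-looping identities $\tilde e_ig_{i\pm 1}^{\pm 1}\tilde e_i=q^{\pm 4}\tilde e_i$; these can be checked inside $V_q^{\otimes 3}$, where $V(0)_q$ occurs with multiplicity one, so that $\tilde e_ig_{i\pm 1}^{\pm 1}\tilde e_i$ is automatically a scalar multiple of $\tilde e_i$, the scalar being pinned down by evaluating on a single highest-weight vector. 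Combined with the surjectivity of $\eta$ imported from \cite{LZ1}, this produces the desired factorisation $\eta_q$.

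For part (2), the specialisation $q\mapsto 1$ is well defined because $[2]_q,[3]_q$ and $[3]_q-1$ become $2,3,2$, which are units in $\C$. Under this specialisation, the defining relations (\ref{braidgiq}) collapse to the braid relations, $g_i=g_i^{-1}$ (hence $g_i^2=1$), $g_ie_i=e_ig_i=e_i$ and $e_ig_{i\pm 1}e_i=e_i$; together with the consequences $e_ie_{i\pm 1}e_i=e_i$ and $e_i^2=3e_i$ coming from (\ref{extrarel}), these are exactly Brauer's defining relations for $B_r(3)$, via $g_i\leftrightarrow(i,i+1)$ and $e_i$ corresponding to the standard cup-cap diagram. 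This yields a surjection $B_r(3)\twoheadrightarrow BMW_r(1)$. To upgrade it to an isomorphism I would invoke a generic spanning set of $BMW_r(y,z)$ of cardinality $(2r-1)!!$ (for example the diagrammatic basis of Morton--Wassermann or the cellular basis of Xi); since $\dim_{\C}B_r(3)=(2r-1)!!$ as well, a dimension count forces the surjection to be an isomorphism. The main obstacle is thus the availability of such a $\CA_{y,z}$-basis of $BMW_r(y,z)$, which in the companion paper \cite{LZ2} is either cited or constructed explicitly.
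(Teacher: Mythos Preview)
Your sketch is correct and follows what is essentially the standard route; note, however, that the paper itself does not prove this proposition but simply imports it from \cite[Lemma 4.2, Theorem 4.4]{LZ2}.  What you have written is a faithful outline of the argument one finds there (or in any standard treatment): compute the three eigenvalues of $\check R$ on $V_q\otimes V_q$ from the multiplicity-free decomposition, define $e_i$ by the quadratic expression forced by (\ref{equadg}), verify the de-looping relations inside $V_q^{\otimes 3}$ using that $V(0)_q$ occurs there with multiplicity one, and conclude surjectivity from \cite{LZ1}.

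One small point of language in part (2): you write ``generic spanning set'' but then cite the Morton--Wassermann tangle basis and Xi's cellular basis.  You do need an honest $\CA_{y,z}$-\emph{basis} (equivalently, freeness of $BMW_r(y,z)$ of rank $(2r-1)!!$), not merely a spanning set, since a spanning set only gives $\dim BMW_r(1)\le (2r-1)!!$, which is the same inequality your surjection $B_r(3)\twoheadrightarrow BMW_r(1)$ already yields.  With the basis in hand the specialisation has dimension exactly $(2r-1)!!$ and the dimension count goes through; the paper itself invokes precisely this freeness (citing \cite{X}) in Section~5.
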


\subsection{The quasi-idempotent $\Phi_q$.}

Following \cite{LZ2}, we introduce an element $\Phi_q$ of $BMW_r(q)$,
and state its principal properties. Our objective will be to show that
$\Phi_q\in BMW(\CK)$ generates the kernel of $\eta_q$. We begin with the following
elements of $BMW_r(q)$. Note that we regard $BMW_s(q)\subset BMW_{s+1}(q)$ in the
usual way; it is well known that the algebra $BMW_s(q)$ generated by the $e_i$ and $g_i$
with $1\leq i\leq s-1$ and the relations among those generators is a subalgebra
of $BMW_{s+1}(q)$. In terms of diagrams, this subalgebra is spanned by the
diagrams spanning $BMW_s(q)$, with an additional string joining the rightmost nodes.

Let $f_i= -g_i-(1-q^{-2})e_i +q^2$, and set
\begin{eqnarray} \label{defFq}
F_q= f_1 f_3.
\end{eqnarray}
We also define $e_{1 4}=g_3^{-1} g_1 e_2 g_1^{-1} g_3$ and $e_{1 2 3
4} = e_2 g_1 g_3^{-1} g_2 g_1^{-1} g_3$.

\begin{definition}\label{defn:phiq}
Maintaining the above notation, define the following element of
$BMW_4(q)\subseteq BMW_r(q)$:
\begin{eqnarray}\label{defPhiq}
\Phi_q &=&aF_q e_2 F_q - b F_q - c F_qe_2e_{14}F_q +d F_q e_{1 2 3 4} F_q,
\end{eqnarray}
where
\begin{eqnarray}\label{abcd}
\begin{aligned}
a&=1+(1-q^{-2})^2,\\
b&=1+(1-q^{2})^2+(1-q^{-2})^2,\\
c&= \frac{1 +(2+q^{-2})(1-q^{-2})^2 + (1+q^2)
(1-q^{-2})^4}{([3]_q-1)^2},\\
d&=(q-q^{-1})^2=q^2(a-1).\\
\end{aligned}
\end{eqnarray}
\end{definition}

The principal properties of these elements are summarised in the following
statement, which is \cite[Prop. 7.3]{LZ2}.

\begin{prop}\label{propsPhiq}
The elements $F_q, \Phi_q$ have the following properties:
\begin{enumerate}
\item $F_q^2= (q^2+q^{-2})^2 F_q$. \label{PropsPhiq1}
\item $e_i\Phi_q=\Phi_q e_i=0$ for $i=1,2,3$. \label{PropsPhiq2}
\item $\Phi_q^2 = -(q^2+q^{-2})^2 (1+(1-q^{2})^2+(1-q^{-2})^2 )
\Phi_q$. \label{PropsPhiq3}
\item $\Phi_q$ acts as $0$ on $V_q^{\otimes 4}$. \label{PropsPhiq4}
\end{enumerate}
\end{prop}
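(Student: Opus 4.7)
The plan is to establish all four assertions by systematic computation inside $BMW_r(q)$ using the relations (\ref{braidgiq}) and their consequences (\ref{extrarel}), with assertion (\ref{PropsPhiq4}) requiring in addition the representation theory of $\cU_q(\fsl_2)$ on $V_q^{\otimes 4}$.

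For (\ref{PropsPhiq1}), since $f_1$ involves only $g_1,e_1$ and $f_3$ only $g_3,e_3$, the commutation relations for generators whose indices differ by at least $2$ give $[f_1,f_3]=0$, so $F_q^2=f_1^2f_3^2$. It thus suffices to verify $f_i^2=(q^2+q^{-2})f_i$, which follows by direct expansion using the quadratic identity $g_i^2=(q^2-q^{-2})g_i+1-q^{-4}(q^2-q^{-2})e_i$ (a rearrangement of (\ref{equadg})), together with $g_ie_i=q^{-4}e_i$ and $e_i^2=(q^2+1+q^{-2})e_i$; collecting the coefficients of $g_i$, $e_i$ and $1$ gives exactly $(q^2+q^{-2})f_i$.

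For (\ref{PropsPhiq2}), a short calculation gives
\begin{equation*}
e_if_i=\bigl(-q^{-4}-(1-q^{-2})(q^2+1+q^{-2})+q^2\bigr)e_i=0
\end{equation*}
for each $i$, and symmetrically $f_ie_i=0$. Since $e_1$ commutes with $f_3$ and $e_3$ commutes with $f_1$, this yields $e_jF_q=F_qe_j=0$ for $j\in\{1,3\}$, and hence $e_j\Phi_q=\Phi_qe_j=0$ for $j=1,3$ because every term of $\Phi_q$ in (\ref{defPhiq}) has $F_q$ as both leftmost and rightmost factor. The case $j=2$ is the main obstacle of the proof: $e_2F_q\ne 0$, so one must separately compute each of $e_2\!\cdot\!F_qe_2F_q$, $e_2\!\cdot\!F_q$, $e_2\!\cdot\!F_qe_2e_{14}F_q$ and $e_2\!\cdot\!F_qe_{1234}F_q$ using the skein identities (\ref{delooping}), (\ref{combeg}) and the definitions of $e_{14}$ and $e_{1234}$, and then check that the particular combination with coefficients $a,b,c,d$ vanishes. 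The coefficients in (\ref{abcd}) are calibrated precisely to produce this cancellation.

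Given (\ref{PropsPhiq1}) and (\ref{PropsPhiq2}), part (\ref{PropsPhiq3}) is obtained by expanding $\Phi_q^2$ as a sum of sixteen products of the four defining terms: each contains an internal factor $F_q^2$ which reduces to $(q^2+q^{-2})^2F_q$ by (\ref{PropsPhiq1}), and the remaining simplifications use the auxiliary values of $F_qe_2F_q$, $F_qe_2e_{14}F_q$ and $F_qe_{1234}F_q$ modulo lower-order terms that were computed along the way in (\ref{PropsPhiq2}). Finally, for (\ref{PropsPhiq4}), an eigenvalue analysis of $g_i$ on $V_q\otimes V_q\cong V(0)_q\oplus V(2)_q\oplus V(4)_q$ shows that $f_i$ acts as $(q^2+q^{-2})$ times the projection onto the $V(2)_q$ summand; hence $F_q$ acts as $(q^2+q^{-2})^2$ times the projection of $V_q^{\otimes 4}$ onto a $9$-dimensional $\cU_q$-subspace isomorphic to $V(0)_q\oplus V(2)_q\oplus V(4)_q$. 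The $\cU_q$-commutant on this subspace is $3$-dimensional, so after identifying the images of $e_2$, $e_2e_{14}$ and $e_{1234}$ as combinations of the three isotypic projections, the vanishing of the $a,b,c,d$-combination on each summand can be checked by evaluating on a highest weight vector of $V(0)_q$, $V(2)_q$ and $V(4)_q$ in turn.
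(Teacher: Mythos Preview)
The paper does not supply a proof of this proposition; it simply quotes the statement from the companion paper \cite[Prop.~7.3]{LZ2}. There is therefore no argument in the present paper to compare your proposal against.

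Evaluated on its own terms, your outline is sound. Part (\ref{PropsPhiq1}) and the cases $i=1,3$ of (\ref{PropsPhiq2}) follow exactly as you indicate from the identities $f_i^2=(q^2+q^{-2})f_i$ and $e_if_i=0$, both of which are short direct checks in $BMW_r(q)$. Your treatment of (\ref{PropsPhiq4}) via the eigenvalue interpretation of $f_i$ as $(q^2+q^{-2})$ times the projector onto the $V(2)_q$-summand of $V_q\otimes V_q$, and the resulting reduction of $\Phi_q$ to a $\cU_q$-endomorphism of the $9$-dimensional space $V(2)_q\otimes V(2)_q\cong V(0)_q\oplus V(2)_q\oplus V(4)_q$ with $3$-dimensional commutant, is correct and is the natural way to see why $\Phi_q$ vanishes on tensor space. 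The only genuine gap is the case $i=2$ of (\ref{PropsPhiq2}), and consequently (\ref{PropsPhiq3}): you correctly flag this as ``the main obstacle'' but then only assert that the coefficients $a,b,c,d$ are ``calibrated precisely to produce this cancellation'' without performing, or even indicating the intermediate identities needed for, the computation. As a proof sketch this is acceptable; as a proof it is not, since the verification that these four specific constants make $e_2\Phi_q$ vanish is essentially the entire content of the proposition.
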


\subsection{The classical limit $q\lr 1$; the Brauer algebra.}\label{sect:class}
Let $B_r(\delta)$ be the Brauer algebra over a commutative ring $A$,
with $\delta\in A$; this may be defined as follows. It has generators
$\{s_1,\dots,s_{r-1};e_1,\dots,e_{r-1}\}$, with relations
$s_i^2=1,\; e_i^2=\delta e_i,\; s_ie_i=e_is_i=e_i$ for all $i$,
$s_is_j=s_js_i,\;s_ie_j=e_js_i,\;e_ie_j=e_je_i$ if $|i-j|\geq 2$,
and $s_is_{i+1}s_i=s_{i+1}s_is_{i+1},\; e_ie_{i\pm 1}e_i=e_i$ and
$s_ie_{i+1}e_i=s_{i+1}e_i,\; e_{i+1}e_is_{i+1}=e_{i+1}s_{i}$ for all
applicable $i$. We shall assume the reader is familiar with the
diagrammatic representation of a basis of $B_r(\delta)$, and how
basis elements are multiplied by concatenation of diagrams. In
particular, the group ring $A\Sym_r$ is the subalgebra of
$B_r(\delta)$ spanned by the diagrams with $r$ ``through strings'',
and the algebra contains elements $w\in\Sym_r$ which are appropriate
products of the $s_i$.

In this work we shall take $A=\C$ and $\delta=3$; the corresponding
Brauer algebra will be denoted by $B_r(3)$. In the identification
given by Proposition \ref{bmwfact}(2) of the specialisation
$BMW_r(q)\otimes_{\phi_1}\C$ with $B_r(3)$, $g_i\otimes 1$ corresponds
to $s_i$ and $e_i\otimes 1$ corresponds to $e_i\in B_r(3)$.
Accordingly, $F_q$ and $\Phi_q$ specialise respectively to
$F=(1-s_1)(1-s_3)$ and $\Phi=Fe_2F-F-\frac{1}{4}Fe_2e_{1,4}F$,
where $e_{1,4}=s_1s_3e_2s_3s_1$. From the relations in
Proposition \ref{propsPhiq} we see that $F^2=4F$, and $\Phi^2=-4\Phi$.

Let $V_1$ be the irreducible $U(\fsl_2)$-module. The following statement
may be found in \cite[\S 6]{LZ2}.

\begin{prop}\label{braueractn}
\begin{enumerate}
\item There is a surjective homomorphism
\be\label{classeta}
\eta:B_r(3)\lr\End_{U(\fsl_2)}(V_1^{\otimes r}):=E(r).
\ee
\item The kernel of $\eta$ contains $\Phi$.
\end{enumerate}
\end{prop}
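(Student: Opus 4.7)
The plan is to derive both parts by specialising results already available at generic $q$. By Proposition \ref{bmwfact}(2), $BMW_r(q)$ is an $\CA_q$-integral form interpolating between $BMW_r(\CK)$ and $B_r(3)$, with $g_i\otimes 1\mapsto s_i$ and $e_i\otimes 1\mapsto e_i$ under $\phi_1$; this will be the main tool for both parts.

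For part (1), the cleanest route is classical Brauer duality: via the exceptional isomorphism $\fsl_2\cong\fso_3$, the $3$-dimensional irreducible $\fsl_2$-module $V_1$ is the defining representation of $\fso_3$ with its invariant symmetric form, and Brauer's theorem says that the algebra generated by the flip $s_i$ and the contraction-then-coevaluation $e_i$ on adjacent tensor factors exhausts $\End_{\fso_3}(V_1^{\otimes r})$. One checks that under $\eta$, the generators $s_i, e_i$ map to precisely these operators --- equivalently, to the $q\to 1$ specialisations of the $R$-matrix images of $g_i, e_i$ --- so $\eta$ is surjective. In keeping with the integral spirit of the paper, surjectivity may equivalently be obtained by specialising $\eta_q$ (Proposition \ref{bmwfact}(1)) along $\phi_1$: the Lusztig integral form of $\cU_q$ stabilises a natural $\CA_q$-lattice in $V_q^{\otimes r}$ whose specialisation at $q=1$ is $V_1^{\otimes r}$, and surjectivity passes to $-\otimes_{\CA_q}\C$ because everything in sight is finitely generated over $\CA_q$.

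For part (2), we invoke Proposition \ref{propsPhiq}(4), which asserts that $\Phi_q$ acts as $0$ on $V_q^{\otimes 4}$. The coefficients $a,b,c,d$ of Definition \ref{defn:phiq} all lie in $\CA_q$ --- in particular the denominator $([3]_q-1)^2$ appearing in $c$ is invertible there by the very construction of $\CA_q$ --- so $\Phi_q$ is an element of the integral algebra $BMW_4(q)\subset BMW_r(q)$, and a direct inspection shows $\phi_1(\Phi_q)=\Phi$ as elements of $B_4(3)$. Since the inclusion $BMW_4(q)\hookrightarrow BMW_r(q)$ realises the former as acting on the first four tensor factors and trivially elsewhere, specialising the identity $\Phi_q\cdot V_q^{\otimes 4}=0$ at $q=1$ yields $\Phi\cdot V_1^{\otimes r}=0$, so $\Phi\in\ker\eta$. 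The main technical point --- and the only genuinely nontrivial one --- is the compatibility of specialisation at $q=1$ on the representation side: one needs a $\cU_q$-stable $\CA_q$-lattice in $V_q^{\otimes r}$ that also carries the $R$-matrix action and specialises to $V_1^{\otimes r}$. The denominators inverted in $\CA_q$, namely $[2]_q,[3]_q$ and $[3]_q-1$, were chosen precisely to accommodate the matrix entries of the $R$-matrix on $V_q$ and the coefficients defining $\Phi_q$; once this integral framework is in place, both parts follow formally from the corresponding quantum statements.
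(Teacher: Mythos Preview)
The paper does not supply a proof of this proposition; it simply refers the reader to \cite[\S 6]{LZ2}. Your argument is therefore not competing with anything in the present paper, and it is essentially correct as a self-contained justification.

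A couple of remarks. For part (1), invoking classical Brauer--Weyl duality for $\fso_3$ via the isomorphism $\fsl_2\cong\fso_3$ is exactly the standard route, and is what underlies \cite[\S 6]{LZ2}; your alternative, obtaining surjectivity by specialising $\eta_q$, is in the spirit of \S 5 of the present paper but is not needed here. For part (2), your specialisation argument is valid: the coefficients $a,b,c,d$ lie in $\CA_q$, so $\Phi_q\in BMW_4(q)$; Lusztig's $\CA$-lattice in $V_q^{\otimes 4}$ (a fortiori an $\CA_q$-lattice) is stable under the $R$-matrix action, and since $\Phi_q$ already vanishes on $V_q^{\otimes 4}$ over $\CK$ it vanishes on the lattice, whence $\Phi=\phi_1(\Phi_q)$ kills $V_1^{\otimes 4}$. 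That said, this is slightly more elaborate than necessary: once one has the explicit formula $\Phi=Fe_2F-F-\tfrac14 Fe_2e_{1,4}F$ given just above the proposition, one may verify $\eta(\Phi)=0$ by a direct computation in $\End(V_1^{\otimes 4})$, avoiding any appeal to integral forms. Either route is acceptable.
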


\subsection{Statements}\label{statements} We are now able to state our main results.

\begin{thm}\label{mainq}
Let $\cU_q$ be the quantised enveloping algebra of $\fsl_2$ over the
field $\CK=\C(q^{\frac{1}{2}})$. Let $V_q$ be the irreducible
three-dimensional $\cU_q$-module (with highest weight $2$), and let
$\eta_q:BMW_r(\CK)\lr E_r(2,q):=\End_{\cU_q}(V_q^{\otimes r})$ be
the surjective homomorphism defined in Proposition \ref{bmwfact}
(1). Then the kernel of $\eta_q$ is the two sided ideal of
$BMW_r(\CK)$ generated by the quasi-idempotent $\Phi_q$, defined in
Definition \ref{defn:phiq}.

In particular, $E_r(2,q)$ has a presentation given as follows. $E_r(2,q)$ has
generators $g_i,e_i$, $i=1,\dots,r-1$, with relations (\ref{braidgiq}), plus
the relation $\Phi_q=0$.
\end{thm}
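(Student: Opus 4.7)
My plan is to prove the identity $\ker\eta_q=\langle\Phi_q\rangle$ via a dimension count. The easy direction is immediate: Proposition \ref{propsPhiq}(4) states that $\Phi_q\in BMW_4(\CK)\subseteq BMW_r(\CK)$ acts as zero on $V_q^{\otimes 4}$, and via the standard embedding $BMW_4(\CK)\hookrightarrow BMW_r(\CK)$ acting on the first four tensor factors of $V_q^{\otimes r}$, we have $\eta_q(\Phi_q)=0$. Setting $I_q:=\langle\Phi_q\rangle$, this yields a surjection $\bar\eta_q:BMW_r(\CK)/I_q\twoheadrightarrow E_r(2,q)$, and it remains to show $\dim_\CK BMW_r(\CK)/I_q\le\dim_\CK E_r(2,q)$.

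The main tool is the cellular structure of $BMW_r(\CK)$. Its cell modules $W(\lambda,f)$ are indexed by pairs with $0\le f\le\lfloor r/2\rfloor$ and $\lambda\vdash r-2f$, and since $\CK$ is a generic field, $BMW_r(\CK)$ is semisimple with the $W(\lambda,f)$ forming its full set of irreducibles. By semisimplicity, $\dim BMW_r(\CK)/I_q=\sum_{(\lambda,f)\in\Lambda_q}(\dim W(\lambda,f))^2$, where $\Lambda_q$ consists of those $(\lambda,f)$ on which $\Phi_q$ acts trivially. Meanwhile the classical tensor rule $V(a)_q\otimes V(b)_q=\bigoplus V(c)_q$ gives $V_q^{\otimes r}=\bigoplus_d m_d V(d)_q$, so $\dim E_r(2,q)=\sum_d m_d^2$. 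The task reduces to matching $\Lambda_q$ bijectively with the set of $V(d)_q$ appearing in $V_q^{\otimes r}$, with $\dim W(\lambda,f)=m_d$ under this bijection.

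To identify $\Lambda_q$, I reduce to $r=4$. The relations $e_i\Phi_q=\Phi_q e_i=0$ for $i=1,2,3$ of Proposition \ref{propsPhiq}(2) constrain $\Phi_q$ to act trivially on the lower cellular layers of $BMW_4(\CK)$, so the cellular ideal $I_q|_{r=4}$ is controlled by the image of $\Phi_q$ in the Hecke algebra quotient $H_4:=BMW_4(\CK)/\langle e_1,e_2,e_3\rangle$, which over $\CK$ has the same representation theory as $\CK\Sym_4$. Using the explicit formula (\ref{defPhiq}) together with the coefficients (\ref{abcd}), a finite calculation in $H_4$ shows that this image is a nonzero scalar multiple of the central idempotent supported on exactly those $H_4$-irreducibles whose corresponding $BMW_4(\CK)$-cells lie in $\ker\eta_q$; this identifies $\Lambda_q$ at $r=4$. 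For general $r$, a propagation argument within the cellular chain $BMW_4(\CK)\subset\cdots\subset BMW_r(\CK)$ shows that the cells of $BMW_r(\CK)$ killed by $I_q$ are precisely those whose restrictions down to $BMW_4(\CK)$ lie in the cellular ideal identified above, matching $\ker\eta_q$ cell by cell. Summing the dimensions of surviving cells then gives the required equality.

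The main obstacle will be the combination of the finite but intricate calculation inside $H_4$ verifying that the image of $\Phi_q$ is the claimed central idempotent (which depends essentially on the specific coefficients $a,b,c,d$ in (\ref{abcd})), together with the careful cellular-algebraic bookkeeping needed for the propagation from $r=4$ up to arbitrary $r$, ensuring that no unintended cells survive in the quotient.
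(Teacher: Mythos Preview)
Your argument rests on the claim that $BMW_r(\CK)$ is semisimple, and this is false for $r\geq 5$; the paper states this explicitly just after the theorem statements in \S\ref{statements}. Genericity of $\CK$ is not the issue: the two-parameter algebra $BMW_r(y,z)$ over $\C(y,z)$ is indeed semisimple, but here one has specialised to $y=q^{-4}$, $z=q^2-q^{-2}$, which are algebraically dependent, and this specialisation lies on the non-semisimple locus (its $q\to 1$ limit is $B_r(3)$, which is also non-semisimple for $r\geq 5$). Consequently the cell modules are not all irreducible, the formula $\dim BMW_r(\CK)/I_q=\sum_{(\lambda,f)\in\Lambda_q}(\dim W(\lambda,f))^2$ is invalid, and the dimension count collapses. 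The kernel of $\eta_q$ necessarily contains the radical $\CR$ of $BMW_r(\CK)$, and the substantive content of the theorem is precisely that $\langle\Phi_q\rangle\supseteq\CR$; your plan has no mechanism to establish this.

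The paper's route is quite different. It first reduces the quantum statement to the classical one for $B_r(3)$ via a cellular specialisation argument (Proposition~\ref{redn:class}). In the Brauer algebra it then invokes the radical criterion of Theorem~\ref{thm:rad}: to show $J\supseteq\CR$ it suffices that for every $\lambda\in\Lambda_r^0$ (those partitions with $J\cdot L_r(\lambda)=0$), every composition factor of $W_r(\lambda)$ other than $L_r(\lambda)$ is some $L_r(\mu)$ with $\mu\in\Lambda_r^1$. This is verified using the Doran--Hanlon--Wales content criterion (Proposition~\ref{dhwcrit}, Corollary~\ref{composcrit}) by a short case check on the three families $(t)$, $(t-1,1)$, $(1^3)$ that constitute $\Lambda_r^0$. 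None of this is replaceable by a Hecke-quotient calculation at $r=4$ together with semisimple propagation.
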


The classical ($q=1$) analogue of the above statement is as follows.

\begin{thm}\label{mainclass}
Let $U$ be the universal enveloping algebra of $\fsl_2(\C)$, and let $V_1$
be the irreducible $U$-module of dimension three. If $B_r(3)$ is the complex
$r$-string Brauer algebra with parameter $3$ (\S \ref{sect:class}) and
$\eta:B_r(3)\lr \End_{U(\fsl_2)}(V_1^{\otimes r}):=E(r)$ be the surjective homomorphism
of Prop. \ref{braueractn}. Then the kernel of $\eta$ is the two sided ideal of
$B_r(3)$ which is generated by the quasi-idempotent $\Phi$ of
\S \ref{sect:class}.
\end{thm}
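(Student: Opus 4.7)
The plan is to establish the theorem by a dimension count. Proposition~\ref{braueractn}(2) gives $\Phi\in\ker\eta$, so $\eta$ factors through a surjection
\begin{equation*}
\ov\eta\colon B_r(3)/\langle\Phi\rangle \twoheadrightarrow E(r),
\end{equation*}
and it suffices to prove the equality $\dim_\C B_r(3)/\langle\Phi\rangle = \dim_\C E(r)$.

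For $\dim E(r)$, we use classical Schur-Weyl duality for $\fsl_2$. Since $V_1=V(2)$, the Clebsch-Gordan rule yields a decomposition $V_1^{\otimes r} = \bigoplus_{k\geq 0} m_{r,k} V(2k)$, where the multiplicities $m_{r,k}$ satisfy a simple Motzkin-like recursion, so $\dim E(r) = \sum_k m_{r,k}^2$.

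For $\dim B_r(3)/\langle\Phi\rangle$, we employ the Graham-Lehrer cellular structure of $B_r(3)$, whose cell modules $\Delta(f,\lambda)$ are indexed by pairs $(f,\lambda)$ with $\lambda\vdash r-2f$. Since $\fsl_2\cong\fso_3$ has rank one, the simple modules of $E(r)$ correspond precisely to the $O(3)$-admissible cells, i.e.\ those with $\lambda_1'+\lambda_2'\leq 3$ (equivalently, the Young diagram of $\lambda$ contains none of the forbidden sub-shapes $(2,2)$, $(2,1,1)$, $(1^4)$). Consequently $\ker\eta$ equals the two-sided cell ideal $J\subseteq B_r(3)$ generated by the cell modules $\Delta(f,\lambda)$ with $\lambda$ non-admissible, and the theorem reduces to the equality $\langle\Phi\rangle = J$.

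The principal obstacle is this last equality. The inclusion $\langle\Phi\rangle\subseteq J$ is immediate. For the reverse, first treat the case $r=4$ directly: using $\Phi^2 = -4\Phi$ (the $q=1$ specialization of Proposition~\ref{propsPhiq}(\ref{PropsPhiq3})), verify that the quasi-idempotent $\Phi$ projects nontrivially onto each of the three non-admissible cells $\Delta(0,(2,2))$, $\Delta(0,(2,1,1))$, $\Delta(0,(1^4))$ of $B_4(3)$, yielding $\dim\langle\Phi\rangle = 105 - 91 = 14 = \dim(J\cap B_4(3))$. For general $r$, propagate via the cellular structure and the inclusion $B_4(3)\subseteq B_r(3)$: any non-admissible cell module $\Delta(f,\lambda)\subseteq B_r(3)$ receives, in an appropriate cellular sense, a nonzero image of a forbidden cell from $B_4(3)$ and therefore lies in $\langle\Phi\rangle$. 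This propagation step is essentially the second fundamental theorem of invariant theory for $\fso_3$ acting on $V_1^{\otimes r}$, and constitutes the technical heart of the argument.
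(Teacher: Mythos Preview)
Your proposal has a genuine gap, and it stems from treating $B_r(3)$ as if it were semisimple. The paper emphasises (just after the statement of Theorem~\ref{mainclass}) that $B_r(3)$ is \emph{not} semisimple for $r\ge 5$; this is precisely what makes the theorem non-trivial.

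The step ``Consequently $\ker\eta$ equals the two-sided cell ideal $J$ generated by the cell modules $\Delta(f,\lambda)$ with $\lambda$ non-admissible'' is not justified and, as stated, not correct. Knowing which simples $L_r(\lambda)$ survive in $E(r)$ tells you only that $\ker\eta$ is the intersection of the annihilators of the admissible $L_r(\lambda)$; equivalently, $\ker\eta = J + \CR$ where $\CR$ is the radical of $B_r(3)$ (this is exactly Corollary~\ref{redn:rad} in the paper). Since $\CR\neq 0$ for $r\ge 5$, the kernel is strictly larger than any ``sum of non-admissible cell pieces'': portions of $\CR$ live inside $C^\lambda(W_r(\lambda)\otimes W_r(\lambda))$ for \emph{admissible} $\lambda$, via $\Rad_r(\lambda)\neq 0$. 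Your propagation argument, even if made precise, would at best show that $\langle\Phi\rangle$ hits every non-admissible cell; it says nothing about whether $\langle\Phi\rangle$ swallows these radical pieces sitting inside admissible cells.

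The paper's proof confronts this directly. After reducing to $\langle\Phi\rangle\supseteq\CR$, it invokes a general criterion (Theorem~\ref{thm:rad}): this holds if and only if, for every admissible $\lambda$, every composition factor of $W_r(\lambda)$ other than $L_r(\lambda)$ is $L_r(\mu)$ with $\mu$ non-admissible. That is the real content of the proof, and it is established by the combinatorial content criterion of Doran--Hanlon--Wales (Proposition~\ref{dhwcrit} and Corollary~\ref{composcrit}), together with the reduction Lemma~\ref{composn-trans}. Your dimension-count strategy would need an independent computation of $\dim\langle\Phi\rangle$ for general $r$, and your propagation sketch does not supply one; in effect it assumes the answer by presuming $\ker\eta$ has a clean cell-ideal description that it does not have.
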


It should be noted that both $BMW_r(\CK)$ and $B_r(3)$ are non-semisimple
if $r\geq 5$. This is part of the significance of these results.

We shall prove both theorems together, making use of the following fact.

\begin{prop}\label{redn:class}
Theorem \ref{mainq} is a consequence of Theorem \ref{mainclass}.
\end{prop}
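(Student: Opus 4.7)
The plan is to deduce Theorem \ref{mainq} from Theorem \ref{mainclass} by an integral specialisation argument using the $\CA_q$-form $BMW_r(q)$. First, by Proposition \ref{propsPhiq}(4) the element $\Phi_q$ lies in $\ker(\eta_q)$, so $\eta_q$ induces a surjection $\bar\eta_q : BMW_r(\CK)/(\Phi_q) \twoheadrightarrow E_r(2,q)$. Theorem \ref{mainq} is the statement that $\bar\eta_q$ is an isomorphism, so, since surjectivity is automatic, the problem reduces to the single dimension inequality
\begin{equation*}
\dim_\CK BMW_r(\CK)/(\Phi_q) \;\leq\; \dim_\CK E_r(2,q).
\end{equation*}

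The inputs I would rely on are: (i) $BMW_r(q)$ is a finitely generated (in fact free) $\CA_q$-module, with $BMW_r(q)\otimes_{\phi_1}\C \cong B_r(3)$ by Proposition \ref{bmwfact}(2); (ii) the element $\Phi_q$ specialises at $q=1$ to $\Phi$, as recorded in \S\ref{sect:class}; and (iii) the standard fact that $\dim_\CK E_r(2,q) = \dim_\C E(r)$, which follows from generic semisimplicity of $\cU_q(\fsl_2)$, since $V_q^{\otimes r}$ and $V_1^{\otimes r}$ have matching isotypic decompositions. Writing $J$ for the two-sided ideal of $BMW_r(q)$ generated by $\Phi_q$, the right-exactness of tensor product, together with (i) and (ii), yields natural isomorphisms
\begin{equation*}
\bigl(BMW_r(q)/J\bigr)\otimes_{\phi_1}\C \;\cong\; B_r(3)/(\Phi),
\qquad
\bigl(BMW_r(q)/J\bigr)\otimes_{\phi_q}\CK \;\cong\; BMW_r(\CK)/(\Phi_q).
\end{equation*}
By the assumed Theorem \ref{mainclass}, the $\C$-algebra on the left of the first isomorphism is isomorphic to $E(r)$, of dimension $d := \dim_\C E(r)$.

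I would then invoke Nakayama's lemma to transport this dimension count to the generic fibre. Choose $d$ lifts in $BMW_r(q)/J$ of a $\C$-basis of $B_r(3)/(\Phi)$, and localise $\CA_q$ at the maximal ideal $\mathfrak m := \ker(\phi_1)$. Since $BMW_r(q)/J$ is a finitely generated $\CA_q$-module whose reduction modulo $\mathfrak m$ is spanned by these $d$ lifts, Nakayama gives that they generate the localisation $(BMW_r(q)/J)_{\mathfrak m}$ over $(\CA_q)_{\mathfrak m}$. Passing from the local ring to $\CK$ is a further localisation, so the same $d$ elements span $BMW_r(\CK)/(\Phi_q)$ over $\CK$, yielding the required bound $\dim_\CK BMW_r(\CK)/(\Phi_q) \leq d = \dim_\CK E_r(2,q)$.

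I expect the main obstacle to be verifying the right-exactness claim in the precise form used above, namely that the specialisation of $J$ under each of $\phi_1$ and $\phi_q$ coincides with the two-sided ideal of the specialised algebra generated by the image of $\Phi_q$. This is essentially formal — surjective ring maps send two-sided ideals to two-sided ideals with the same generators — but one must track the $\CA_q$-module structure carefully, and this is where freeness of $BMW_r(q)$ enters. The auxiliary comparison $\dim_\CK E_r(2,q) = \dim_\C E(r)$ is standard but should be cited explicitly in the actual proof.
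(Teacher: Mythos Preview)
Your argument is correct and takes a genuinely different route from the paper's. The paper (deferring to \cite[Cor.~7.12(1)]{LZ2}) works through the cellular structure of $BMW_r(q)$: the cell modules, their radicals, and the simple quotients all specialise compatibly from $\CA_q$ to both $\CK$ and $\C$, and the criteria of \cite[\S 5]{LZ2} then show that $\langle\Phi_q\rangle$ contains $\Rad\,BMW_r(\CK)$ if and only if $\langle\Phi\rangle$ contains $\Rad\,B_r(3)$; these radical conditions are in turn equivalent to $\langle\Phi_q\rangle=\ker\eta_q$ and $\langle\Phi\rangle=\ker\eta$ respectively. Your approach sidesteps the radical comparison altogether, reducing to a single dimension inequality and proving it by upper semi-continuity of fibre dimension (Nakayama at the closed point $q=1$, then base change to $\CK$). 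This is lighter machinery --- it needs only that $BMW_r(q)$ is a finitely generated $\CA_q$-module and that $\dim_\CK E_r(2,q)=\dim_\C E(r)$ --- whereas the paper's cellular argument is more structural and tracks which simples the ideal annihilates, information that is in any case re-derived later in the proof of Theorem~\ref{mainclass}. One small wording point: passing from $(\CA_q)_{\mathfrak m}$ to $\CK$ is not literally a localisation (since $\CK=\C(q^{1/2})$ properly contains the fraction field $\C(q)$ of $\CA_q$), but it is a flat base change, and that is all you need for the spanning set to persist.
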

\begin{proof}[Sketch of Proof] This is Corollary 7.12~(1) of \cite{LZ2}.
For the convenience of the reader, we give a brief sketch of the proof here.

The algebra $BMW_r(q)$ has a well known cellular structure
(\cite{X}), with the cells being indexed by the partially ordered
set \be\label{lambda-r} \Lambda_r:=\{{\text {partitions}}\;
\lambda\mid |\lambda|=t,\;0\leq t\leq r,\; t \equiv r (\mod 2)\},
\ee where for any partition
$\lambda=(\lambda_1\geq\lambda_2\geq\dots\geq \lambda_p>0)$, we
write $|\lambda|=\sum_i\lambda_i$. The partial order on $\Lambda_r$
is given by $\mu>\lambda$ if $|\mu|>|\lambda|$ or
$|\mu|=|\lambda|=t$ and $\mu>\lambda$ in the dominance order on
partitions of $t$. This cellular structure is inherited by the
specialisations $BMW_r(\CK)$ and $B_r(3)$ (cf. \cite{GL96}), and by
Prop. 7.1 of \cite{LZ2}. The cell modules, their radicals, and the
irreducible modules for any specialisation of $BMW_r(q)$ are
obtained by specialising the respective $BMW_r(q)$-modules.

Using this, one shows, using the general criteria established in
\cite[\S 5]{LZ2} that if $J_q$ is the ideal of $BMW_r(q)$ generated
by $\Phi_q$, and $J_\CK$ and $J$ are its respective specialisations
to $BMW_r(\CK)$ and $B_r(3)$, then $J_\CK$ contains the radical of
$BMW_r(\CK)$ if and only if $J$ contains the radical of $B_r(3)$.
But the latter are precisely the respective conditions for
$J_\CK$ (resp. $J$) to be the whole kernel of $\eta_q$ (resp. $\eta$),
whence the assertion.
\end{proof}

We therefore turn to the proof of Theorem \ref{mainclass}, which will require
detail concerning the cellular structure of the Brauer algebras. Before discussing
the details in \S\ref{sect:brauer}, we shall recall some basic facts concerning
cellular algebras.

\section{Cellular algebras and their radicals.} In this section we summarise the
main properties of cellular algebras of which we make use below. The
principal references are \cite{GL03,GL04} and \cite[\S 5]{LZ2}.

\subsection{Cellular algebras and cell modules}\label{ca:basic}
Let $\bF$ be any commutative ring,
which later will be taken to be a field. The $\bF$-algebra $B$ is {\it cellular} if
it has a {\it cell datum} $(\Lambda,M,C,{\,}^*)$ with the following properties
\begin{itemize}
\item $\Lambda$ is a finite
partially ordered set
\item $M:\Lambda\to\text{Sets}$ is a function which
associates a finite set $M(\lambda)$ to each $\lambda\in\Lambda$
\item $C$ is a function
$$
C:\amalg_{\lambda\in\Lambda}M(\lambda)\times M(\lambda)\lr B,
$$
whose image is an $\bF$-basis of $B$; for $S,T\in M(\lambda)$,
write $C(S,T):=C^\lambda_{S,T}$.
\item For any element $a\in B$ and $S,S'\in M(\lambda)$ there are scalars
$r_a(S',S)\in\bF$ such that for $S,T\in M(\lambda)$, we have
\be\label{cell-basic}
aC^\lambda_{S,T}\equiv\sum_{S'\in M(\Lambda)}r_a(S',S)C^\lambda_{S',T}\;\;
\mod B(<\lambda),
\ee
where $B(<\lambda)$ is the two-sided ideal of $B$ spanned by
$\{C^\mu_{U,V}\mid \mu<\lambda,\;\;U,V\in M(\mu)\}$.
\item The linear map defined by
${\,}^*:C^\lambda_{S,T}\mapsto {C^\lambda_{S,T}}^*:=C^\lambda_{T,S}$ is
an involutory anti-automorphism of $B$.
\end{itemize}

The {\it cell module} $W(\lambda)$ ($\lambda\in\Lambda$) is the free
$\bF$-module spanned by symbols $C(S)$ ($S\in M(\lambda)$), with the
$B$-action defined by (\ref{cell-basic}), i.e.
\[
a\cdot C(S)=\sum_{S'\in M(\lambda)}r_a(S',S)C(S'), \quad \text{for
$a\in B$}.
\]
Define the bilinear form $\phi_\lambda$ on the basis $\{C(S)\}$ of
$W(\lambda)$ by
$$
{C^\lambda_{S,T}}^2=\phi_\lambda(C(S),C(T))C^\lambda_{S,T}\;\;\mod B(<\lambda).
$$
The principal properties of $\phi_\lambda$ are:
\begin{itemize}
\item $\phi_\lambda$ is well-defined and is symmetric.
\item $\phi_\lambda$ is invariant, in the sense that for $u,v\in W(\lambda)$ and $a\in B$,
we have $\phi_\lambda(a\cdot u,v)=\phi_\lambda(u,a^*\cdot v)$.
\item It follows that $\Rad(\lambda):=\{w\in W(\lambda)\mid \phi_\lambda(w,v)=0\text{ for all }
v\in W(\lambda)\}$ is a submodule of $W(\lambda)$.
\item If $\bF$ is a field, then for any element $w\in W(\lambda)\setminus\Rad(\lambda)$,
we have $W(\lambda)=B\cdot w$. Hence $\Rad(\lambda)$ is the unique maximal submodule
of $W(\lambda)$.
\end{itemize}

The last statement above arises in the following context. In addition
to its structure as a left $B$-module defined above, the $\bF$-module
$W(\lambda)$ also has a right $B$-module structure, where the action
is as above, composed with ${\,}^*$. We define the
$\bF$-module monomorphism $C^\lambda$ by

\be\label{def:clambda}
\begin{aligned}
C^\lambda:&W(\lambda)\otimes_\bF W(\lambda)\lr B\\
&C(S)\otimes C(T)\mapsto C^\lambda_{S,T}.\\
\end{aligned}
\ee

The properties of this map are summarised as follows.
\begin{itemize}\label{props:clambda}
\item There is an $\bF$-module isomorphism
$B\lr\oplus_{\lambda\in\Lambda} \im (C^\lambda)$.
\item If we compose $C^\lambda$ with the
natural map $B\to B/B(<\lambda)$, where $B(<\lambda)$ is the
two-sided ideal defined above, we obtain a monomorphism of
$(B,B)$ bimodules, whose image is, in the obvious notation,
$B(\leq\lambda)/B(<\lambda)$.
\item For $u,v$ and $w$ in $W(\lambda)$, we have
$$
C^\lambda(u\otimes v)\cdot w=\phi_\lambda(v,w)u.
$$
\end{itemize}

If $\bF$ is a field, the cyclic nature of $W(\lambda)$ if $\phi_\lambda
\neq 0$ follows from the last statement.

\subsection{Simple modules and composition factors}

It follows from \S\ref{ca:basic} that for each $\lambda\in\Lambda$,
either $\phi_\lambda=0$ or $L(\lambda):=W(\lambda)/\Rad(\lambda)$
is a simple $B$-module. One of the keys to understanding these modules,
and more generally the composition factors of the cell modules $W(\lambda)$,
is the following fact.

\begin{prop}\label{homs} Let $\lambda,\mu\in\Lambda$, and
suppose we have a non-zero homomorphism $\psi:W(\mu)\to W(\lambda)/M$,
where $M$ is some submodule of $W(\lambda)$. Then
\begin{enumerate}
\item $\mu\geq\lambda$.
\item If $\mu=\lambda$ and $\bF$ is a field then $\psi$ is
realised as multiplication by a scalar.
\end{enumerate}
\end{prop}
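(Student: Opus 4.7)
My plan is to exploit the $B$-equivariance of $\psi$ together with the structural identity $C^\lambda(u\otimes v)\cdot w = \phi_\lambda(v,w)\,u$ from \S\ref{ca:basic}. I would begin by choosing $T\in M(\mu)$ with $\psi(C(T))\neq 0$ and a lift $v\in W(\lambda)\setminus M$ so that $\psi(C(T))=\bar v$ in $W(\lambda)/M$.

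For part (1), I would use the composition-factor structure provided by the cellular machinery. The image $\im(\psi)$ is a nonzero quotient of $W(\mu)$, whose unique simple head (when $\phi_\mu\neq 0$) is $L(\mu)$. Hence $L(\mu)$ is a composition factor of $\im(\psi)\subseteq W(\lambda)/M$, and thus of $W(\lambda)$, and the Graham--Lehrer decomposition theorem for cellular algebras then forces $\mu\geq\lambda$. A more direct route, which handles the case $\mu<\lambda$ very cleanly, is to apply $B$-linearity of $\psi$ to $a=C^\mu_{R,S}\in B(\leq\mu)$, yielding
\[
\phi_\mu\bigl(C(S),C(T)\bigr)\,\psi(C(R)) \;=\; C^\mu_{R,S}\cdot\bar v;
\]
when $\mu<\lambda$, $C^\mu_{R,S}\in B(<\lambda)$ annihilates $W(\lambda)$, so the right side vanishes, forcing the left to vanish for all $R,S$ and hence (over the field $\bF$, using the existence of some $R$ with $\psi(C(R))\neq 0$) $\phi_\mu\equiv 0$.

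For part (2), with $\mu=\lambda$ the calculation becomes self-symmetric: both sides are governed by the same form $\phi_\lambda$. For any $R,T'\in M(\lambda)$ one obtains
\[
\phi_\lambda\bigl(C(T'),C(T)\bigr)\,\psi(C(R)) \;=\; \phi_\lambda\bigl(C(T'),v\bigr)\,\overline{C(R)}.
\]
Provided $\phi_\lambda\neq 0$, one can choose $T,T'$ with $\phi_\lambda(C(T'),C(T))\neq 0$ and read off $\psi(C(R))=\alpha\,\overline{C(R)}$ for the scalar $\alpha:=\phi_\lambda(C(T'),v)/\phi_\lambda(C(T'),C(T))$, independent of $R$. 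Thus $\psi$ coincides with $\alpha$ times the canonical projection $W(\lambda)\twoheadrightarrow W(\lambda)/M$.

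The main obstacle is the incomparable case in part~(1), where the direct ``$B(<\lambda)$ annihilates $W(\lambda)$'' argument is unavailable. I would handle it via the composition-factor approach above, which ultimately rests on the bimodule isomorphism $B(\leq\nu)/B(<\nu)\cong W(\nu)\otimes W(\nu)^*$ together with the uniqueness of the simple head of each nondegenerate cell module, organised inductively over the partial order on $\Lambda$ as in the standard cellular-algebra literature.
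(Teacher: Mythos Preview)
The paper does not prove this proposition; it is stated as a standard fact (essentially \cite[Prop.~3.6]{GL96}), so there is no proof in the paper to compare against directly.

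Your argument for part~(2) is correct and is the standard one. For part~(1), however, your ``composition-factor'' approach is circular: the statement that every composition factor $L(\mu)$ of $W(\lambda)$ satisfies $\mu\geq\lambda$ is precisely the consequence the paper draws \emph{from} this proposition (see the bullet list immediately following it), so invoking it here begs the question. The vague appeal to an inductive argument ``as in the standard cellular-algebra literature'' does not close this gap.

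Your direct route is the right idea and in fact already suffices for the incomparable case, once the key step is sharpened. You observed that $C^\mu_{R,S}\in B(<\lambda)$ when $\mu<\lambda$, and hence kills $W(\lambda)$. The same annihilation holds for \emph{every} $\mu\not\geq\lambda$: expand the product $C^\mu_{R,S}C^\lambda_{T,U}$ in the cellular basis using the axiom for $\lambda$ (left action) and, via the involution $*$, the axiom for $\mu$ (right action); one sees that every term has index $\nu$ with $\nu\leq\lambda$ and $\nu\leq\mu$, so the $C^\lambda$-coefficients vanish unless $\lambda\leq\mu$. Thus if $\mu\not\geq\lambda$ then $C^\mu_{R,S}$ annihilates $W(\lambda)$ and hence $W(\lambda)/M$, and your displayed identity yields $\phi_\mu(C(S),C(T))\,\psi(C(R))=0$ for all $R,S,T$, forcing $\phi_\mu\equiv 0$ over a field. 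Under the quasi-hereditary hypothesis adopted immediately after the proposition this is the desired contradiction, and that is all the paper requires.
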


Although neither is essential, we shall now make two simplifying
assumptions. First, we assume that {\it  henceforth $\bF$ is a field},
and secondly, that for each $\lambda\in\Lambda$, $\phi_\lambda\neq 0$.
This says that $B$ is quasi-hereditary. The algebras to which we shall apply
the theory satisfy both these assumptions, but it is possible to develop
the theory without them.

Given the assumptions, we have the following summary of the main points
concerning the representation theory of $B$.

\begin{itemize}
\item The modules $L(\lambda)=W(\lambda)/\Rad(\lambda)$ ($\lambda\in\Lambda$)
form a complete set of non-isomorphic simple $B$-modules.

\item If $L(\mu)$ is a composition factor of $W(\lambda)$, then $\mu\geq\lambda$.
\item The following statements are equivalent: (a) $B$ is semisimple;
(b) Each cell module $W(\lambda)$ is simple; (c) For each
$\mu\neq\lambda$, $\Hom_B(W(\mu),W(\lambda))=0$.

\end{itemize}

\subsection{The radical} Suppose $B$ is a cellular algebra over the
field $\bF$ as above, and denote the radical of $B$ by $\CR$. This is a
two sided ideal of $B$ which may be defined by the property that $B/\CR$
is semisimple, and any homomorphism $B\lr A$, where $A$ is a semisimple
$\bF$-algebra, factors through $B/\CR$. In particular, we have an exact
sequence of $\bF$-algebras
$$
0\lr \CR\lr B\overset{\pi}{\lr}
\oplus_{\lambda\in\Lambda}\End_\bF(L(\lambda))\lr 0,
$$
so that $\CR$ may be characterised as the set of elements of $B$ which act
trivially on each of the irreducible $B$-modules. Note that in the exact sequence
above, for each $\lambda\in\Lambda$, the subspace $\im (C^\lambda)$ of $B$ is mapped
by $\pi$ (surjectively) onto $\End_\bF(L(\lambda))$.

Now the key point of the proof of Theorem \ref{mainq} is to prove that a certain
two-sided ideal of $B$ contains $\CR$. With this in mind we state the
next result, which may be found in \cite[Lemma 5.3]{LZ2}.

\begin{prop}\label{prop:rad}
The radical $\CR$ has a filtration
($\CR(\lambda):=\CR\cap B(\leq \lambda)_{\lambda\in\Lambda}$)
with the property that there is a $(B,B)$ bimodule isomorphism
$$
\CR(\lambda)/(\CR\cap B(<\lambda))\lr W(\lambda)\otimes_\bF \Rad(\lambda)
+\Rad(\lambda)\otimes_\bF W(\lambda)\subseteq W(\lambda)\otimes_\bF W(\lambda).
$$
\end{prop}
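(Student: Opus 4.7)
The plan is to identify $\CR(\lambda)/(\CR\cap B(<\lambda))$ with its image inside $B(\leq\lambda)/B(<\lambda)\cong W(\lambda)\otimes_\bF W(\lambda)$, to show that this image lies in $N(\lambda):=W(\lambda)\otimes_\bF\Rad(\lambda)+\Rad(\lambda)\otimes_\bF W(\lambda)$, and then to lift every element of $N(\lambda)$ back to $\CR(\lambda)$. Since the maps involved are $(B,B)$-bimodule homomorphisms throughout, the resulting equality of subspaces automatically upgrades to a bimodule isomorphism.

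A preliminary observation does most of the heavy lifting. For any $\nu,\mu\in\Lambda$ with $\nu\not\geq\mu$, every element of $\im(C^\nu)$ annihilates $W(\mu)$: the product $C^\nu_{S,T}\cdot C^\mu_{S',T'}$ lies in $B(\leq\nu)\cap B(\leq\mu)$, which is spanned by basis elements $C^\kappa$ with $\kappa\leq\nu$ and $\kappa\leq\mu$ simultaneously. Since $\nu\not\geq\mu$, no such $\kappa$ equals $\mu$, so the product actually lies in $B(<\mu)$, and the induced action of $\im(C^\nu)$ on $W(\mu)$ is zero. Consequently $\pi_\mu(\im(C^\nu))=0$ whenever $\nu\not\geq\mu$, where $\pi_\mu:B\to\End_\bF(L(\mu))$ denotes projection onto the $\mu$-summand of $B/\CR\cong\oplus_{\mu\in\Lambda}\End_\bF(L(\mu))$.

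For the containment $\CR(\lambda)/(\CR\cap B(<\lambda))\subseteq N(\lambda)$, take $b\in\CR(\lambda)$ with image $\bar b=\sum_i u_i\otimes v_i$ in $W(\lambda)\otimes W(\lambda)$. Because $B(<\lambda)$ acts as zero on $L(\lambda)$, the action of $b$ on $L(\lambda)$ depends only on $\bar b$, and by the identity $C^\lambda(u\otimes v)\cdot w=\phi_\lambda(v,w)u$ recalled in \S\ref{ca:basic} this action is the endomorphism $\bar w\mapsto\sum_i\bar\phi_\lambda(\bar v_i,\bar w)\bar u_i$ of $L(\lambda)$. Its vanishing, combined with the nondegeneracy of the induced form $\bar\phi_\lambda$ on $L(\lambda)$, forces $\sum_i \bar u_i\otimes\bar v_i=0$ in $L(\lambda)\otimes L(\lambda)$, i.e.\ $\bar b\in N(\lambda)$.

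The main obstacle is the reverse containment, for which the cleanest route is to prove the stronger statement that the preimage $\widetilde N(\lambda)\subseteq B(\leq\lambda)$ of $N(\lambda)$ under the quotient map satisfies $\widetilde N(\lambda)\subseteq\CR+B(<\lambda)$. Given this, any lift $b_0\in\widetilde N(\lambda)$ of $n\in N(\lambda)$ decomposes as $b_0=r+b_1$ with $r\in\CR$ and $b_1\in B(<\lambda)\subseteq\widetilde N(\lambda)$; then $r=b_0-b_1$ lies in $\widetilde N(\lambda)\cap\CR\subseteq\CR(\lambda)$ and lifts $n$. To prove $\widetilde N(\lambda)\subseteq\CR+B(<\lambda)$ it suffices to check $\pi(\widetilde N(\lambda))\subseteq\pi(B(<\lambda))$ inside $B/\CR$, and the preliminary observation reduces this to a case analysis on $\mu$: for $\mu>\lambda$ both sides vanish because $B(\leq\lambda)\subseteq B(<\mu)$; for $\mu=\lambda$ both vanish, the $\widetilde N(\lambda)$ side by the defining property of $N(\lambda)$; for $\mu$ incomparable with $\lambda$ both vanish because no $\nu\leq\lambda$ can satisfy $\nu\geq\mu$ (else $\mu\leq\nu\leq\lambda$, contradicting incomparability); and for $\mu<\lambda$ both equal the whole of $\End_\bF(L(\mu))$, since $\im(C^\mu)\subseteq B(<\lambda)$ already surjects onto it. Combining the two inclusions yields the asserted bimodule isomorphism.
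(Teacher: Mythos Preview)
Your argument is correct. The paper itself does not prove this proposition; it simply cites \cite[Lemma 5.3]{LZ2}, so there is no in-paper proof to compare against. Your route via the semisimple quotient $\pi:B\to\bigoplus_\mu\End_\bF(L(\mu))$ is exactly the natural one, and the two inclusions you establish do give the claimed bimodule isomorphism.

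One small point is worth making explicit in the reverse-containment step. The case analysis on $\mu$ establishes the \emph{componentwise} statements $\pi_\mu(\widetilde N(\lambda))\subseteq\pi_\mu(B(<\lambda))$, but componentwise containment of projections does not by itself imply containment of images in a direct sum. What makes it work here is that $B(<\lambda)$ is a two-sided ideal of $B$, so $\pi(B(<\lambda))$ is an ideal of the semisimple algebra $B/\CR$ and is therefore equal to the direct sum $\bigoplus_{\mu<\lambda}\End_\bF(L(\mu))$ of the simple blocks it meets nontrivially. Since your case analysis shows $\pi(\widetilde N(\lambda))\subseteq\bigoplus_{\mu<\lambda}\End_\bF(L(\mu))$, the desired inclusion $\pi(\widetilde N(\lambda))\subseteq\pi(B(<\lambda))$ follows. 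You are clearly aware of this (it is implicit in ``$\im(C^\mu)\subseteq B(<\lambda)$ already surjects onto it''), but stating the ideal argument in one line would remove any ambiguity.
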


This leads to the next result, which
is easily deduced from \cite[Theorem 5.4]{LZ2}, and which is
crucial in the proof of our main theorem.

\begin{thm}\label{thm:rad}
Maintaining the notation above, let $J$ be a two-sided ideal of $B$
which satisfies $J^*=J$. Define complementary subsets
$\Lambda^0$ and $\Lambda^1$ of $\Lambda$ by
$\Lambda^0:=\{\lambda\in\Lambda\mid J\cdot L(\lambda)=0\}$ and
$\lambda^1:=\Lambda\setminus\Lambda^0$. Then $J\supseteq \CR$ if
and only if, for each $\lambda\in\Lambda^0$, all composition factors of
the cell module $W(\lambda)$ other than $L(\lambda)$ are isomorphic to some
$L(\mu)$ for $\mu\in\Lambda^1$.
\end{thm}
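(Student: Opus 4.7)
The strategy is to combine the filtration of $\CR$ from Proposition \ref{prop:rad} with the cyclicity of the cell modules $W(\lambda)$, translating $\CR\subseteq J$ into a condition on composition factors of each cell module. A useful reformulation to keep in mind is that $\CR\subseteq J$ is equivalent to $B/J$ being semisimple, since $B/\CR$ is semisimple and any surjection of $B$ onto a semisimple algebra factors through the quotient by the Jacobson radical.

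I would first establish two preliminary facts. (i) $\CR\cdot W(\lambda)=\Rad(\lambda)$ for every $\lambda\in\Lambda$: the inclusion $\subseteq$ follows from $\CR\cdot L(\lambda)=0$, and the reverse by lifting $u\otimes v\in \tilde\CR_\lambda:=W(\lambda)\otimes\Rad(\lambda)+\Rad(\lambda)\otimes W(\lambda)$ to $\CR\cap B(\leq\lambda)$ via Proposition \ref{prop:rad} and applying $C^\lambda(u\otimes v)\cdot w=\phi_\lambda(v,w)u$ (modulo $B(<\lambda)\cdot W(\lambda)=0$), together with $\phi_\lambda\neq 0$. (ii) Cyclicity of $W(\lambda)$ yields $JW(\lambda)=W(\lambda)$ for $\lambda\in\Lambda^1$ (since $JL(\lambda)\ne 0$ forces $JW(\lambda)\not\subseteq\Rad(\lambda)$), while $JW(\lambda)\subseteq\Rad(\lambda)$ trivially for $\lambda\in\Lambda^0$.

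For the $(\Leftarrow)$ direction, assuming the composition-factor hypothesis, I would first deduce $JW(\lambda)=\Rad(\lambda)$ for each $\lambda\in\Lambda^0$: the module $W(\lambda)/JW(\lambda)$ is annihilated by $J$, so its composition factors lie in $\{L(\nu):\nu\in\Lambda^0\}$; by the hypothesis only $L(\lambda)$ (with multiplicity one) is such a factor of $W(\lambda)$, forcing $W(\lambda)/JW(\lambda)\cong L(\lambda)$. Then I would prove $\CR\cap B(\leq\lambda)\subseteq J$ by induction on $\lambda$ in the poset: given $x\in\CR\cap B(\leq\lambda)$, its image in $P_\lambda:=B(\leq\lambda)/B(<\lambda)\cong W(\lambda)\otimes W(\lambda)$ lies in $\tilde\CR_\lambda$; using $\Rad(\lambda)=JW(\lambda)$, any $u\in\Rad(\lambda)$ may be written $u=\sum_k j_k u_k'$ with $j_k\in J$, yielding an explicit lift $j=\sum_k j_k\widetilde{C^\lambda(u_k'\otimes v)}\in J$ congruent to $x$ modulo $B(<\lambda)$; symmetrising in view of $J^*=J$ for the swap summand of $\tilde\CR_\lambda$ allows one to arrange $j\in J\cap\CR$, so $x-j\in\CR\cap B(<\lambda)$, and the inductive hypothesis gives $x\in J$.

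For the $(\Rightarrow)$ direction, assume $\CR\subseteq J$. The preliminary identity $\CR W(\lambda)=\Rad(\lambda)$ together with $\CR\subseteq J$ gives $JW(\lambda)=\Rad(\lambda)$ for each $\lambda\in\Lambda^0$. Suppose for contradiction some $\lambda\in\Lambda^0$ has a composition factor $L(\mu)$ of $W(\lambda)$ with $\mu\in\Lambda^0\setminus\{\lambda\}$; then necessarily $\mu>\lambda$ by Proposition \ref{homs}, and $L(\mu)$ appears as a composition factor of $\Rad(\lambda)=JW(\lambda)$. Extracting an appropriate sub-quotient of $W(\lambda)$ on which $J$ acts trivially exhibits a non-split extension of simples in the category of $B/J$-modules, contradicting the semisimplicity of $B/J$. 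The hard part will be the lifting step in $(\Leftarrow)$ — ensuring that the lift $j$ lies in $J\cap\CR$ (and not merely in $J$) so that the inductive hypothesis applies; this symmetric lifting construction exploits $J^*=J$ and is essentially the content of \cite[Theorem 5.4]{LZ2}, from which the present theorem is easily deduced.
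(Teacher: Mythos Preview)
Your proposal is correct and closely parallels the paper's argument, but it is more self-contained in one respect and takes a slightly different tack in another.

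The paper's proof is very short because it outsources the main step: it invokes \cite[Theorem~5.4]{LZ2} (together with Proposition~\ref{prop:rad}) to assert directly that $J\supseteq\CR$ is equivalent to $J\cdot W(\lambda)\supseteq\Rad(\lambda)$ for every $\lambda$, then disposes of $\lambda\in\Lambda^1$ by cyclicity exactly as you do in (ii). For $\lambda\in\Lambda^0$ it runs a composition series $W(\lambda)=W_0\supset W_1=\Rad(\lambda)\supset\cdots\supset W_s=0$ and observes that $J\cdot W(\lambda)\supseteq\Rad(\lambda)$ precisely when $J\cdot L(\mu_i)\neq 0$ for all $i>1$. Your argument for $(\Leftarrow)$ reaches the same intermediate conclusion $JW(\lambda)=\Rad(\lambda)$ by the cleaner observation that $W(\lambda)/JW(\lambda)$ is a $B/J$-module whose only admissible composition factor is $L(\lambda)$; this is equivalent to the paper's downward induction along the composition series. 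Where you genuinely add content is your inductive lifting of elements of $\CR\cap B(\leq\lambda)$ into $J$ using $C^\lambda$ and $J^*=J$: this is precisely the substance of \cite[Theorem~5.4]{LZ2}, which the paper simply cites. So your $(\Leftarrow)$ is not a different route so much as an unpacking of the black box the paper relies on; you correctly flag the delicate point (arranging the lift to lie in $J\cap\CR$, not merely $J$) and correctly attribute it to the cited result.

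For $(\Rightarrow)$ you argue via semisimplicity of $B/J$ and a non-split extension, whereas the paper argues via the same composition series filtration. Both treatments are brief; yours has the advantage of making explicit that $\CR\subseteq J$ forces $B/J$ to be semisimple, which is the conceptual reason the condition on composition factors must hold.
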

\begin{proof}
It is a straightforward consequence of Proposition \ref{prop:rad} above and
the general characterisation of $\CR$ as the set of elements of $B$ which
annihilate each $L(\lambda)$, that $J\supseteq\CR$ if and only if,
for each $\lambda\in\Lambda$, $J\cdot W(\lambda)\supseteq \Rad(\lambda)$.
But for $\lambda\in\Lambda^1$, since $J\cdot W(\lambda)\not\subseteq \Rad(\lambda)$,
it follows from the cyclic nature of $W(\lambda)$ (see above) that
$J\cdot W(\lambda)=W(\lambda)\supset \Rad(\lambda)$.

It is therefore evident (cf. \cite[Theorem 5.4(3)]{LZ2}),
that $J\supseteq \CR$ if and only if
for each $\lambda\in\Lambda^0$, $J\cdot W(\lambda)\supseteq \Rad(\lambda)$.
We shall show that the condition in our statement is equivalent to this.

Take $\lambda\in\Lambda^0$. Let
$$
W(\lambda)=W_0\supset W_1=\Rad(\lambda)\supset W_2\supset\dots\supset W_s=0
$$
be a composition series of $W(\lambda)$, and write $L_i$ for the simple quotient
$L_i:=W_{i-1}/W_i$ ($i=1,\dots,s$). Then $L_1\cong L(\lambda)$ and
since the $L(\mu)$ ($\mu\in\Lambda$) form a complete set of isomorphism classes
of simple $B$-modules, it follows from Proposition \ref{homs} that for $i>1$,
$L_i\cong L(\mu_i)$ for some $\mu_i\gneq\lambda$. Applying $J$, we obtain a sequence
$$
J\cdot W(\lambda)=J\cdot W_0\supseteq J\cdot W_1=J\cdot \Rad(\lambda)
\supseteq J\cdot W_2\supseteq \dots\supseteq J\cdot W_s=0,
$$
and the quotient $J\cdot W_{i-1}/J\cdot W_i\cong J\cdot L(\mu_i)$.

It is therefore clear that $J\cdot W(\lambda)\supseteq\Rad(\lambda)$ if and only
if, for each $i>1$, $J\cdot L(\mu_i)\neq 0$, i.e. $\mu_i\in\Lambda^1$.
\end{proof}
\section{On the representation theory of the Brauer algebras.}\label{sect:brauer}

We shall require some detailed analysis of the cell modules
of the Brauer algebras and their radicals. Most of the material in this section
is adapted from \cite{GL96,DHW,HW,LZ2}. Since there is no difference
in the arguments, for the purposes of the current section, $\CK$
could be any field of characteristic zero, and we shall consider
the Brauer algebras $B_r(\delta)$,
where $\delta$ is any non-zero element of $\CK$. This restriction is not
essential, but enables us to simplify the exposition; moreover our application
is to this situation ($\delta=3$). We shall write $B_r$ for $B_r(\delta)$
for this section only.

Notation will be similar to that in \cite{LZ2}. For $\lambda\in\Lambda_r$,
we have a cell module $W_r(\lambda)$ which has a canonical symmetric
bilinear invariant form $(-,-)_\lambda$, whose radical $\Rad_r(\lambda)$
is the unique maximal submodule of $W_r(\lambda)$. In our situation
($\delta\neq 0$) we have $(-,-)_\lambda\neq 0$ for all $\lambda\in\Lambda_r$
(the ``quasi-hereditary'' case). The irreducible quotients $L_r(\lambda):=
W_r(\lambda)/\Rad_r(\lambda)$ are pairwise non-isomorphic, and provide
a complete set of irreducible $B_r$-modules.

For the proof of the next Lemma, we shall require the following
explicit description of the cell modules. Recall that for any $t$,
the group algebra $\C\Sym_t$ is a subalgebra of $B_t$, realised as
the span of the diagrams with $t$ through strings (or generated by
the $s_i$). Let $\lambda\in\Lambda_r$ with $|\lambda|=t$, and
$r=t+2k$. Since $B_r$ is a $(B_r,B_r)$ bimodule, the subspace
$\tilde {I_r^t}:=B_re_{t+1}e_{t+3}\dots e_{t+2k-1}$ is a
$(B_r,B_t)$-bimodule, and hence {\it a foriori} a
$(B_r,\C\Sym_t)$-bimodule. Clearly the subspace ${\tilde {I_r^t}}'$
spanned by diagrams with at most $t-2$ through strings is a
sub-bimodule, to be interpreted as $0$ if $t<2$. Define $I_r^t$ as the quotient
$(B_r,\C\Sym_t)$-bimodule ${\tilde {I_r^t}}/{\tilde {I_r^t}}'$.

It is easy to see from the description
in \cite[Def. (4.9)]{GL96} (cf. also \cite{DHW}) that
\be\label{defn:w}
W_r(\lambda)\cong I_r^t\otimes_{\C\Sym_t}S(\lambda),
\ee
where $S(\lambda)$ is the cell (or Specht) module for $\C\Sym_t$ which corresponds
to the partition $\lambda$ of $t$.

\begin{lem}\label{strictineq}
Let $\mu_1,\mu_2\in\Lambda_r$. If $L_r(\mu_1)$ is a composition
factor of $W_r(\mu_2)$, then $|\mu_1|>|\mu_2|$ or $\mu_1=\mu_2$.
\end{lem}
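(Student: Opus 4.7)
The plan is to combine \propref{homs} with an idempotent-truncation functor to the symmetric group. First, \propref{homs} forces any composition factor $L_r(\mu_1)$ of $W_r(\mu_2)$ to satisfy $\mu_1\geq\mu_2$ in the cell order on $\Lambda_r$. Since $\mu_1>\mu_2$ in this order means either $|\mu_1|>|\mu_2|$ (in which case the lemma holds) or $|\mu_1|=|\mu_2|$ with strict dominance, it suffices to rule out the case $|\mu_1|=|\mu_2|=t$ with $\mu_1\neq\mu_2$.

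For this I would introduce the idempotent $\varepsilon:=\delta^{-k}e_{t+1}e_{t+3}\cdots e_{r-1}$ (where $r=t+2k$), which is well-defined since $\delta\neq 0$, the factors commute, and each satisfies $e_i^2=\delta e_i$. Because every $w\in\Sym_t$, realised via the first $t$ strands, commutes with $\varepsilon$, one obtains an embedding $\C\Sym_t\hookrightarrow \varepsilon B_r\varepsilon$, so that the (automatically exact) functor $V\mapsto \varepsilon V$ carries $B_r$-modules to $\C\Sym_t$-modules. Using the description \eqnref{defn:w}, one verifies two key identifications: $\varepsilon W_r(\mu)\cong S(\mu)$ as a $\C\Sym_t$-module when $|\mu|=t$, and $\varepsilon W_r(\mu)=0$ when $|\mu|>t$. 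The second is transparent, because multiplying a half-diagram with more than $t$ through-strings by $\varepsilon$ reduces the through-string count to at most $t$, which is killed in the quotient defining $I_r^{|\mu|}$. Passing to irreducible quotients and using simplicity of $S(\mu)$ in characteristic zero together with the non-vanishing of the cell form $\phi_\mu$ on $L_r(\mu)$, one deduces $\varepsilon L_r(\mu)\cong S(\mu)$ for $|\mu|=t$ and $\varepsilon L_r(\mu)=0$ for $|\mu|>t$.

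Applying the functor to a composition series of $W_r(\mu_2)$ with factors $L_r(\mu^{(i)})$ and using exactness, one obtains a filtration of $\varepsilon W_r(\mu_2)=S(\mu_2)$ whose successive quotients are the $\varepsilon L_r(\mu^{(i)})$. By the two identifications, each such quotient is either $0$ (if $|\mu^{(i)}|>t$) or the simple module $S(\mu^{(i)})$ (if $|\mu^{(i)}|=t$). Since $S(\mu_2)$ is itself simple, exactly one factor is non-zero and equal to $S(\mu_2)$; the corresponding $\mu^{(i)}$ then satisfies $|\mu^{(i)}|=t$ and $S(\mu^{(i)})\cong S(\mu_2)$, forcing $\mu^{(i)}=\mu_2$. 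The main obstacle I anticipate is the first identification $\varepsilon W_r(\mu)\cong S(\mu)$ for $|\mu|=t$, together with the non-vanishing $\varepsilon L_r(\mu)\neq 0$; both are essentially classical for the Brauer algebra (cf.~\cite{GL96,HW,DHW}), but require careful bookkeeping with the $(B_r,\C\Sym_t)$-bimodule structure of $I_r^t$ and with the interaction between $\varepsilon$ and the cell form.
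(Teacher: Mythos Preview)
Your proposal is correct and uses essentially the same ingredients as the paper: the element $e_{t+1}e_{t+3}\cdots e_{r-1}$, the identification of its image on $W_r(\mu)$ with the Specht module $S(\mu)$ when $|\mu|=t$, and the simplicity of Specht modules in characteristic zero. The packaging differs slightly: you apply the exact idempotent-truncation functor $V\mapsto\varepsilon V$ to a composition series of $W_r(\mu_2)$ and count factors, whereas the paper works directly with a nonzero homomorphism $\phi:W_r(\mu_1)\to W_r(\mu_2)/M$, restricts it to $e_{t+1}\cdots e_{r-1}W_r(\mu_1)\cong S(\mu_1)$, and observes that this subspace \emph{generates} $W_r(\mu_1)$ as a $B_r$-module, so vanishing of the restricted $\C\Sym_t$-map forces $\phi=0$. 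The step you flag as the main obstacle, namely $\varepsilon L_r(\mu)\neq 0$ for $|\mu|=t$, is exactly this generating property in disguise (if $\varepsilon W_r(\mu)\subseteq\Rad_r(\mu)$ then $W_r(\mu)=B_r\cdot\varepsilon W_r(\mu)\subseteq\Rad_r(\mu)$, a contradiction); the paper's formulation makes this step slightly more transparent by using it directly rather than via $L_r(\mu)$.
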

\begin{proof}
Suppose $\phi:W_r(\mu_1)\lr W_r(\mu_2)/M$ is a $B_r$-homomorphism,
where $\mu_1,\mu_2\in\Lambda_r$ and $|\mu_1|=|\mu_2|=t$. We shall
show that if $\mu_1\neq \mu_2$, $\phi=0$.

Consider the subspaces $W_r^0(\mu_i)$ of $W_r(\mu_i)$ ($i=1,2$) defined by
$$W_r^0(\mu_i)=e_{t+1}e_{t+3}\dots e_{r-1}W_r(\mu_i).$$ Since
$e_{t+1}e_{t+3}\dots e_{r-1}\in B_r^t$ and for
$w\in e_{t+1}e_{t+3}\dots e_{r-1}W_r(\mu_i)$ we have
$$e_{t+1}e_{t+3}\dots e_{r-1}w=\delta^kw \;(r=t+2k),$$
it follows
that $W_r^0(\mu_i)$ generates $W_r(\mu_i)$ as $B_r$-module.
Moreover it is evident that $W_r^0(\mu_i)$ is stable under
$\C\Sym_t\subset B_t\subseteq B_r$, and is isomorphic as
$\C\Sym_t$-module to $S(\mu_i)$.

Since $\phi$ respects the $B_r$-action, we have
$\phi(W_r^0(\mu_1))\subseteq e_{t+1}e_{t+3}\dots e_{r-1}(W_r(\mu_2)/M)$.
It follows that the restriction to $W_r^0(\mu_1)$ of $\phi$ defines a
$\C\Sym_t$-homomorphism $\phi^0:S(\mu_1)\to S(\mu_2)$. But since $\CK$ has
characteristic zero, the cell modules $S(\nu)$ of $\C\Sym_t$ are simple,
whence if $\mu_1\neq \mu_2$, $\phi^0=0$. It follows that $\phi=0$ on
a generating subspace of $W_r(\mu_1)$, whence $\phi=0$.
\end{proof}

Note that Lemma \ref{strictineq} is false if the restriction on
the characteristic of $\CK$ is lifted, since then there may be
non-trivial homomorphisms between the cell (Specht) modules $S(\lambda)$
for $\Sym_t$.

This leads to the following alternative characterisation of the
radical $\Rad_r(\lambda)$ of a cell module of $B_r$.

\begin{cor}\label{char-rad}
Let $\lambda\in\Lambda_r$, with $|\lambda|=t$. For any integer
$m$, let $B_r^m$ be the ideal of $B_r$ spanned by diagrams with
at most $m$ through strings. Then
$$
\Rad_r(\lambda)=\{w\in W_r(\lambda)\mid bw=0\text{ for all }b\in B_r^t\}
=\Ann_{W_r(\lambda)}(B_r^t).
$$
\end{cor}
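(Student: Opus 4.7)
The plan is to prove the asserted equality $\Rad_r(\lambda) = \Ann_{W_r(\lambda)}(B_r^t)$ by establishing both inclusions, using as the central tool the distinguished element $\xi_t := e_{t+1}e_{t+3}\cdots e_{r-1}$ that already appeared in the proof of Lemma~\ref{strictineq}. Two preparatory facts will be needed. First, $B_r^t = B_r\,\xi_t\,B_r$: this is a consequence of the diagrammatic structure of $B_r$, since $\xi_{s} = e_{s+1}\cdots e_{t-1}\xi_t$ lies in $B_r\xi_t$ for every $s \le t$ of the correct parity, and any diagram with exactly $s$ through strings factors through $\xi_s$ on both sides. Second, $W_r^0(\lambda)\cap \Rad_r(\lambda) = 0$: the intersection is a $\C\Sym_t$-submodule of $W_r^0(\lambda)\cong S(\lambda)$, which is simple in characteristic zero, and it cannot equal all of $W_r^0(\lambda)$ since $W_r^0(\lambda)$ generates $W_r(\lambda)$ as a $B_r$-module while $\Rad_r(\lambda)$ is a proper submodule.

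For the inclusion $\Rad_r(\lambda) \subseteq \Ann_{W_r(\lambda)}(B_r^t)$, given $w \in \Rad_r(\lambda)$, the fact that $\Rad_r(\lambda)$ is a $B_r$-submodule forces $\xi_t w \in \Rad_r(\lambda)\cap W_r^0(\lambda) = 0$, and then $B_r^t\cdot w = B_r\,\xi_t\,B_r\cdot w \subseteq B_r\,\xi_t\Rad_r(\lambda) = 0$. For the reverse inclusion, take $w \notin \Rad_r(\lambda)$. By the cyclicity of cell modules outside the radical (recalled in \S\ref{ca:basic}), $B_r w = W_r(\lambda)$, so there exists $b\in B_r$ with $bw \in W_r^0(\lambda)\setminus\{0\}$. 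A short calculation using $\xi_t^2 = \delta^k\xi_t$ (where $r=t+2k$) shows that $\xi_t$ acts on $W_r^0(\lambda) = \xi_tW_r(\lambda)$ as multiplication by the nonzero scalar $\delta^k$; hence $(\xi_tb)w = \delta^k\,bw \neq 0$, and since $B_r^t$ is a two-sided ideal containing $\xi_t$ we have $\xi_tb \in B_r^t$, proving $w \notin \Ann_{W_r(\lambda)}(B_r^t)$.

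The main subtle point is the identification $W_r^0(\lambda)\cap\Rad_r(\lambda) = 0$; once this is in place, $\xi_t$ cleanly separates $\Rad_r(\lambda)$ from its complement in $W_r(\lambda)$. This step depends crucially on $\CK$ having characteristic zero, exactly matching the caveat noted immediately after Lemma~\ref{strictineq}.
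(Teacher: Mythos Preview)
Your proof is correct, but it takes a different route from the paper for the inclusion $\Rad_r(\lambda)\subseteq\Ann_{W_r(\lambda)}(B_r^t)$. The paper deduces this inclusion from Lemma~\ref{strictineq}: every composition factor of $\Rad_r(\lambda)$ is $L_r(\mu)$ with $|\mu|>t$, and $B_r^t$ annihilates $W_r(\mu)$ (hence $L_r(\mu)$) for such $\mu$ by a simple through-string count; thus $B_r^t$ kills each composition factor of $\Rad_r(\lambda)$ and therefore $\Rad_r(\lambda)$ itself. You bypass Lemma~\ref{strictineq} entirely, arguing instead from the two ingredients $B_r^t=B_r\xi_tB_r$ and $W_r^0(\lambda)\cap\Rad_r(\lambda)=0$, the second of which uses the simplicity of $S(\lambda)$ directly rather than indirectly through Lemma~\ref{strictineq}. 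Your argument is more elementary and self-contained (no composition series needed), while the paper's version is more conceptual and explains \emph{why} the corollary is listed as a consequence of Lemma~\ref{strictineq}. For the reverse inclusion both arguments are essentially the same: the paper observes that $\Ann_{W_r(\lambda)}(B_r^t)$ is a proper submodule (since $\xi_tW_r(\lambda)=W_r^0(\lambda)\neq 0$) and hence is contained in the unique maximal submodule $\Rad_r(\lambda)$, which is your cyclicity argument phrased contrapositively.
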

\begin{proof}
By the previous Lemma, any composition factor of $W_r(\lambda)$ other than $L_r(\lambda)$
is of the form $L_r(\mu)$ with $|\mu|>t$. In particular, any composition factor of
$\Rad_r(\lambda)$ is of this form. But if $|\mu|>t$, then $B_r^t$ acts as zero
on $W_r(\mu)$, since the number of through strings of any diagram $bD$ ($b\in B^t_r$)
has at most $t(>r-|\mu|)$ through strings. It follows that $B_r^tL_r(\mu)=0$.
Hence $B_r^t$ acts as zero on each composition factor of $\Rad_r(\lambda)$, and
hence on $\Rad_r(\lambda)$. So $\Rad_r(\lambda)\subseteq\Ann_{W_r(\lambda)}B^t_r$.

But $\Rad_r(\lambda)$ the unique maximal submodule of $W_r(\lambda)$; in fact
any element of $W_r(\lambda)$ which is not in $\Rad_r(\lambda)$ generates $W_r(\lambda)$
(\cite[Prop. 2.5]{GL96}). Hence we have equality.
\end{proof}

Next, we require the following strengthening of \cite[Thm. 5.4]{DHW}. In the
statement, we use the fact that for any positive integer $r$, we have
$\Lambda_r\subset\Lambda_{r+2}$.

\begin{lem}\label{composn-trans}
Let $\mu,\lambda\in\Lambda_r$. Then $L_r(\mu)$ is a composition factor of
$W_r(\lambda)$ if and only if $L_{r+2}(\mu)$ is a composition factor
of $W_{r+2}(\lambda)$
\end{lem}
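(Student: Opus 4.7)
The plan is to use the standard idempotent truncation technique. Set $\epsilon = \delta^{-1} e_{r+1} \in B_{r+2}$; since $e_{r+1}^2 = \delta e_{r+1}$ and $\delta \neq 0$, $\epsilon$ is an idempotent. Via the usual inclusion $B_r \hookrightarrow B_{r+2}$ (extending diagrams by two vertical strands on the right), $B_r$ commutes with $e_{r+1}$, so for any $B_{r+2}$-module $M$ the subspace $\epsilon M$ is a $B_r$-submodule, and the assignment $M \mapsto \epsilon M$ is an exact functor.

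The first key step is the identification $\epsilon W_{r+2}(\lambda) \cong W_r(\lambda)$ of $B_r$-modules for each $\lambda \in \Lambda_r$. By \eqref{defn:w}, this reduces to the bimodule isomorphism $\epsilon I_{r+2}^t \cong I_r^t$, where $t = |\lambda|$. Diagrammatically, a spanning half-diagram of $I_{r+2}^t$ survives left multiplication by $\epsilon$ exactly when its top already contains the arc joining nodes $r+1$ and $r+2$ (otherwise the through-string count drops to $t-2$ and the product vanishes in $I_{r+2}^t$); erasing this arc gives the required bijection with a spanning set of $I_r^t$, and the $(B_r, \C\Sym_t)$-bimodule structures match.

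The second key step is to compute $\epsilon L_{r+2}(\mu)$ for $\mu \in \Lambda_{r+2}$. If $|\mu| = r+2$, then $W_{r+2}(\mu) \cong S(\mu)$ consists of diagrams with $r+2$ through strings, all killed by $e_{r+1}$ in $I_{r+2}^{r+2}$, so $\epsilon L_{r+2}(\mu) = 0$. If $\mu \in \Lambda_r$, exactness of $\epsilon\cdot$ applied to $0 \to \Rad_{r+2}(\mu) \to W_{r+2}(\mu) \to L_{r+2}(\mu) \to 0$, combined with the first step, produces a surjection $W_r(\mu) \twoheadrightarrow \epsilon L_{r+2}(\mu)$. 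Since $L_r(\mu)$ is the unique simple quotient of $W_r(\mu)$, it suffices to verify $\epsilon \Rad_{r+2}(\mu) \subseteq \Rad_r(\mu)$. By Corollary \ref{char-rad}, this reduces to showing that for $b \in B_r^t$ and $w \in \Rad_{r+2}(\mu)$ one has $(b\epsilon) w = 0$, and this holds because $b\epsilon \in B_{r+2}^t$: the cap in $e_{r+1}$ absorbs the two extra through strands contributed by the embedding $B_r \hookrightarrow B_{r+2}$. Hence $\epsilon L_{r+2}(\mu) \cong L_r(\mu)$.

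With these identifications in hand, the lemma is immediate. Apply the exact functor $\epsilon\cdot$ to a composition series $0 = V_0 \subset V_1 \subset \cdots \subset V_n = W_{r+2}(\lambda)$ with $V_i/V_{i-1} \cong L_{r+2}(\nu_i)$; the result is a filtration of $W_r(\lambda) = \epsilon W_{r+2}(\lambda)$ whose non-zero subquotients are precisely the $L_r(\nu_i)$ with $\nu_i \in \Lambda_r$. For any $\mu \in \Lambda_r$, this yields the claimed equivalence. The main technical obstacle is the non-vanishing assertion in the second step; the other identifications are routine diagrammatic computations, but confirming $\epsilon \Rad_{r+2}(\mu) \subseteq \Rad_r(\mu)$ requires the explicit annihilator characterisation of the radical provided by Corollary \ref{char-rad} and careful tracking of how through strings transform under the embedding and the idempotent.
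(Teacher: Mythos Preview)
Your approach is closely related to the paper's but organised differently. The paper introduces two functors, $F\colon B_{r+2}\text{-mod} \to B_r\text{-mod}$ given by $M \mapsto e_{r+1}M$ (essentially your $\epsilon\cdot$) and a right-exact induction $G\colon B_r\text{-mod} \to B_{r+2}\text{-mod}$ given by $M \mapsto B_{r+2}e_{r+1}\otimes_{B_r}M$, and proves the two implications separately: the forward direction by applying $G$ to a nonzero map $W_r(\mu) \to W_r(\lambda)/N$, and the converse by applying the exact $F$ to a nonzero map $W_{r+2}(\mu) \to W_{r+2}(\lambda)/M$, invoking Corollary~\ref{char-rad} exactly as you do to show $F(\Ker\psi)\subseteq\Rad_r(\mu)$ and hence that the result is nonzero. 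Your version dispenses with $G$ by applying $F$ directly to a composition series of $W_{r+2}(\lambda)$, which handles both directions at once and is arguably cleaner.

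There is, however, a gap in your second key step. From $\epsilon\Rad_{r+2}(\mu) \subseteq \Rad_r(\mu)$ you obtain only a surjection $\epsilon L_{r+2}(\mu) \twoheadrightarrow L_r(\mu)$, not an isomorphism: the kernel $\epsilon\Rad_{r+2}(\mu)$ of $W_r(\mu) \to \epsilon L_{r+2}(\mu)$ could be strictly smaller than $\Rad_r(\mu)$. This matters for the ``only if'' direction, since if the filtration subquotients $\epsilon L_{r+2}(\nu_i)$ are not known to be simple, you cannot conclude from $L_r(\mu)$ appearing in $W_r(\lambda)$ that $\mu=\nu_i$ for some $i$. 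The fix is standard but must be made explicit: the map $b \mapsto b\epsilon$ is an algebra isomorphism $B_r \overset{\sim}{\to} \epsilon B_{r+2}\epsilon$ (diagrammatically, $e_{r+1}B_{r+2}e_{r+1} = \delta\, B_r e_{r+1}$), so the $B_r$-action on $\epsilon M$ agrees with the $\epsilon B_{r+2}\epsilon$-action, and then the general Schur-functor fact that $\epsilon L$ is either zero or simple for simple $L$ immediately yields $\epsilon L_{r+2}(\mu)\cong L_r(\mu)$. With that addition your argument is complete.
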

\begin{proof} The proof will use the functors $F,G$ introduced in \cite{G}
in the context of Schur algebras, and used in the current context in
\cite{DHW,M}. They are defined as follows. Let $B_r-mod$ denote the category
of left $B_r$-modules. Recalling that $B_{r-2}\subset B_r\subset B_{r+2}$
as described above, define $F:B_r-mod\lr B_{r-2}-mod$ and
$G:B_r-mod\lr B_{r+2}-mod$ by
\be\label{defn:F,G}
\begin{aligned}
&F(M):=e_{r-1}M\\
&G(M):=B_{r+2}e_{r+1}\otimes_{B_r}M,\\
\end{aligned}
\ee
for any $B_r$-module $M$.
The main properties of $F,G$ are that $FG=\id$, $F$ is exact, and $G$
is right exact (cf. \cite[Cor 4.4, p. 143]{HWu}).

To prove the lemma, first note that $L_r(\mu)$ is a composition factor of
$W_r(\lambda)$ if and only if there is a non-trivial homomorphism
$\xi:W_r(\mu)\to W_r(\lambda)/N$ for some submodule $N$ of $W_r(\lambda)$.
Applying the functor $G$, we obtain $G(\xi):G(W_r(\mu))\to G(W_r(\lambda)/N)$,
and by \cite[Lemma 5.1]{DHW}, $G(\xi)\neq 0$. Moreover by [{\it loc.~cit.},
Prop. 5.2], $G(W_r(\mu))=W_{r+2}(\mu)$. To prove that $L_{r+2}(\mu)$
is a composition factor of $W_{r+2}(\lambda)$, it will therefore suffice to
show that $G(W_r(\lambda)/N)\simeq W_{r+2}(\lambda)/M$, for some submodule
$M$ of $W_{r+2}(\lambda)$. But the exact sequence
$W_r(\lambda)\to W_r(\lambda)/N\to 0$ is taken by the
(right exact) functor $G$
to an exact sequence of $B_{r+2}$-modules, whence $G(W_r(\lambda)/N)$
is isomorphic to a quotient of $G(W_r(\lambda))=W_{r+2}(\lambda)$. Hence
if $L_r(\mu)$ is a composition factor of
$W_r(\lambda)$, then $L_{r+2}(\mu)$ is a composition factor of
$W_{r+2}(\lambda)$.

Conversely, if $L_{r+2}(\mu)$ is a composition factor of
$W_{r+2}(\lambda)$, there is a non-trivial homomorphism
$\psi:W_{r+2}(\mu)\to W_{r+2}(\lambda)/M$, for some submodule
$M$ of $W_{r+2}(\lambda)$. Applying the functor $F$, we obtain
a homomorphism
$$F(\psi):F(W_{r+2}(\mu))\to F(W_{r+2}(\lambda)/M).$$
But by the exact nature of $F$, we have $F(W_{r+2}(\lambda)/M)\cong
F(W_{r+2}(\lambda))/F(M)$, and since $F(W_{r+2}(\nu))=W_r(\nu)$
for $\nu\in\Lambda_r$ \cite[Prop. 5.3]{DHW}, we obtain
$F(\psi):W_r(\mu)\to W_r(\lambda)/F(M)$. It remains only to prove that
$F(\psi)\neq 0$. For this, consider the exact sequence
$$
0\lr \Ker \psi\lr W_{r+2}(\mu)\overset{\psi}{\lr}\im \psi\lr 0.
$$
Applying $F$, we obtain an exact sequence
$$
0\lr F(\Ker \psi)\lr W_{r}(\mu)\overset{F(\psi)}{\lr} F(\im \psi)\lr 0,
$$
and hence it suffices to show that $F(\Ker \psi)\neq W_r(\mu)$.
But $\Ker \psi\subseteq\Rad_{r+2}(\mu)=\Ann_{W_{r+2}(\mu)}(B_{r+2}^s)$,
where $|\mu|=s$ (by Cor \ref{char-rad}). It follows that
$e_{s+1}e_{s+3}\dots e_{r+1}\Ker \psi=0=e_{s+1}e_{s+3}\dots e_{r-1}F(\Ker \psi)$.
Moreover since $e_{s+1}e_{s+3}\dots e_{r-1}$ generates the two-sided ideal
$B_r^s$ of $B_r$, it follows that $F(\Ker\Psi)\subseteq\Ann_{W_r(\mu)}(B_r^s)
=\Rad_r(\mu)$.
\end{proof}

\begin{cor}\label{cor:shift}(cf. \cite[Thm. 5.4]{DHW})
Let $\lambda,\mu\in\Lambda_r$, with $|\mu|=s$. Then $L_r(\mu)$ is a
composition factor of $W_r(\lambda)$ if and only if $L_s(\mu)$ is a
composition factor of $W_s(\lambda)$.
\end{cor}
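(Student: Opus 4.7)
The plan is to deduce the corollary from Lemma \ref{composn-trans} by iteration, inducting on $k=(r-s)/2\geq 0$. Recall that since $\mu\in\Lambda_r$ with $|\mu|=s$, we have $s\leq r$ and $s\equiv r\pmod 2$, so $k$ is a nonnegative integer.

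First I would dispose of the trivial and degenerate cases using Lemma \ref{strictineq}. If $L_r(\mu)$ is a composition factor of $W_r(\lambda)$, then either $\mu=\lambda$ (in which case $s=|\lambda|$, and both sides of the equivalence are tautologically true) or $|\mu|>|\lambda|$, giving $|\lambda|<s$. By the analogous statement for $B_s$, if $L_s(\mu)$ is a composition factor of $W_s(\lambda)$ then likewise $|\lambda|\leq s$. So in every nontrivial situation we may assume $|\lambda|\leq s$, which places $\lambda$ (as well as $\mu$) simultaneously in $\Lambda_{s},\Lambda_{s+2},\dots,\Lambda_r$.

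The induction is then straightforward. The base case $k=0$ is immediate, as $r=s$. For the inductive step, assume the corollary holds with $r$ replaced by $r-2$, namely that $L_{r-2}(\mu)$ is a composition factor of $W_{r-2}(\lambda)$ if and only if $L_s(\mu)$ is a composition factor of $W_s(\lambda)$. By the reduction in the previous paragraph we may assume $|\lambda|\leq s\leq r-2$, so that both $\lambda$ and $\mu$ lie in $\Lambda_{r-2}$. Applying Lemma \ref{composn-trans} with $r-2$ in place of $r$ gives that $L_r(\mu)$ is a composition factor of $W_r(\lambda)$ if and only if $L_{r-2}(\mu)$ is a composition factor of $W_{r-2}(\lambda)$. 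Composing this equivalence with the inductive hypothesis completes the step.

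The only delicate point, and what I would regard as the main thing to be careful about, is the bookkeeping that ensures $\lambda$ and $\mu$ remain in the correct partially ordered set $\Lambda_{r-2j}$ at each stage of the iteration, so that the cell modules $W_{r-2j}(\lambda)$ and simple modules $L_{r-2j}(\mu)$ are actually defined. This is precisely what the preliminary application of Lemma \ref{strictineq} secures, by reducing to the case $|\lambda|\leq s$; there are no further substantial obstacles beyond invoking Lemma \ref{composn-trans} as a black box.
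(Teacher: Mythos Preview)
Your proposal is correct and takes essentially the same approach as the paper, which simply states that the corollary ``is immediate from repeated application of Lemma~\ref{composn-trans}.'' Your induction on $k=(r-s)/2$ is exactly this repeated application, and your preliminary reduction via Lemma~\ref{strictineq} to the case $|\lambda|\leq s$ is a careful (and welcome) justification of a point the paper leaves implicit, namely that $\lambda$ lies in each $\Lambda_{r-2j}$ so the intermediate cell modules are defined.
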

This is immediate from repeated application of Lemma \ref{composn-trans}.

We shall require a combinatorial consequence of Corollary
\ref{cor:shift} which is proved in \cite{DHW}. The statement
involves the following details concerning partitions. Associated to
a partition $\lambda:=(\lambda_1\geq\lambda_2\geq \dots\lambda_p>0)$
we have a ``Young diagram'' which is the set of plane lattice points
$Y(\lambda):=\{(i,j)\in\Z_{\geq 0}\times\Z_{\geq 0}\mid 1\leq j\leq
\lambda_i\}$. For any point $P=(i,j)\in Y(\lambda)$, define the {\it
content} $c(P)$ of $P$ by $c(P)=j-i$. Write $c(\lambda)=\sum_{P\in
Y(\lambda)}c(P)$.

\begin{prop}\label{dhwcrit}(\cite[Thms. 3.1 and 3.3]{DHW})
Let $\mu,\lambda\in\Lambda_r$ with $|\mu|=s$. If $L_s(\mu)$
is a composition factor of $W_s(\lambda)$, then
\begin{enumerate}
\item $Y(\lambda)\subseteq Y(\mu)$ (as sets), and
\item $|\mu|-|\lambda|+\sum_{P\in Y(\mu)\setminus Y(\lambda)}c(P)=0$.
\end{enumerate}
\end{prop}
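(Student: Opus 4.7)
The plan is to deduce both assertions from the action of the center of $B_s$ on cell modules, combined with the branching rule. By Lemma \ref{strictineq} applied with $r=s$, if $L_s(\mu)$ is a composition factor of $W_s(\lambda)$ then either $\mu=\lambda$ or $|\mu|>|\lambda|$. In the former case $Y(\mu)=Y(\lambda)$ and both (1) and (2) hold trivially (the sum is empty), so I would focus on the case $|\mu|>|\lambda|$.

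First I would identify a central element $Z\in B_s$ whose eigenvalue on the cell module $W_s(\nu)$ is an explicit polynomial in the contents of boxes of $Y(\nu)$ and in the numbers $s,|\nu|,\delta$. A natural candidate is a sum of Jucys--Murphy/Nazarov--Wenzl elements, $Z=\sum_{k=1}^{s}L_k$. Its eigenvalue on $W_s(\nu)$ can be computed from the bimodule description \eqnref{defn:w}: the Specht factor $S(\nu)$ contributes $c(\nu)=\sum_{P\in Y(\nu)}c(P)$ by the classical symmetric-group computation, while the cup-cap idempotent $e_{|\nu|+1}e_{|\nu|+3}\cdots e_{s-1}$ contributes a correction $\alpha(s,|\nu|,\delta)$ depending only on $s,|\nu|,\delta$.

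Since $L_s(\mu)$ appears as a subquotient of both $W_s(\lambda)$ and $W_s(\mu)$, $Z$ must act with the same scalar on each, so
\[
c(\mu)-c(\lambda)+\bigl(\alpha(s,|\mu|,\delta)-\alpha(s,|\lambda|,\delta)\bigr)=0.
\]
Once the explicit form of $\alpha$ is computed, this rearranges to (2), provided that (1) is already known so that $c(\mu)-c(\lambda)=\sum_{P\in Y(\mu)\setminus Y(\lambda)}c(P)$.

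It therefore remains to establish the containment (1), which I would prove by induction on $|\mu|-|\lambda|$. Using the functors $F,G$ of Lemma \ref{composn-trans} together with Corollary \ref{cor:shift}, one can reduce to the case $s=|\mu|$, in which $W_s(\mu)\cong S(\mu)$ is a Specht module. In that reduced situation I would apply the branching rule for the restriction of $W_s(\lambda)$ from $B_s$ to $B_{s-1}$, whose sections are $W_{s-1}(\nu)$ for $\nu$ obtained from $\lambda$ by adding or removing a single box, to peel off boxes one at a time; a composition-factor analysis combined with the central-character constraint forces the underlying Young diagram to grow at each step, yielding $Y(\lambda)\subseteq Y(\mu)$. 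The principal obstacle in this plan is the explicit computation of $\alpha(s,|\nu|,\delta)$, i.e.\ the eigenvalue contribution from the cup-cap idempotent on which all the combinatorial content of equation (2) ultimately rests.
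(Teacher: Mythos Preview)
The paper does not prove this proposition; it is quoted without argument from \cite[Thms.~3.1 and 3.3]{DHW}. So the relevant comparison is with the original Doran--Hanlon--Wales proofs.

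Your strategy for condition (2) is essentially theirs. They produce a central element of $B_s$ (arising from the quadratic Casimir in the orthogonal-group realisation, or equivalently from a sum of Nazarov's Jucys--Murphy elements) and compute its scalar action on each $W_s(\nu)$; equating the scalars for $\mu$ and $\lambda$ gives the content identity. Your decomposition of the eigenvalue as $c(\nu)+\alpha(s,|\nu|,\delta)$ is the correct shape, and the computation of $\alpha$ is a routine diagram calculation once the Jucys--Murphy recursion is written on the standard basis of $I_s^t$. (A side remark: for general $\delta$ the coefficient of $|\mu|-|\lambda|$ in the resulting identity is $(\delta-1)/2$; the form recorded in the paper is already the specialisation $\delta=3$, consistent with its application in \S4.)

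Your argument for condition (1), however, has a genuine gap. In the branching filtration of $\Res^{B_s}_{B_{s-1}}W_s(\lambda)$ the sections $W_{s-1}(\nu)$ have $\nu$ obtained from $\lambda$ by adding \emph{or} removing a box. Since $|\mu|=s$ we have $L_s(\mu)=S(\mu)$, and its restriction to $B_{s-1}$ has constituents $L_{s-1}(\mu')$ with $\mu'=\mu$ minus a box. If such an $L_{s-1}(\mu')$ occurs in a section with $\nu=\lambda-b$, the inductive hypothesis yields only $Y(\lambda)\setminus\{b\}\subseteq Y(\mu')\subseteq Y(\mu)$, and nothing places $b$ in $Y(\mu)$. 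Worse, in that branch $|\mu'|-|\nu|=|\mu|-|\lambda|$, so your inductive parameter does not even decrease. The assertion that ``the central-character constraint forces the Young diagram to grow at each step'' does not close this: the central character supplies a single scalar equation relating $\nu$ and $\mu'$, and both $\nu=\lambda+{\rm box}$ and $\nu=\lambda-{\rm box}$ can satisfy such a constraint. The argument in \cite{DHW} for (1) does not proceed by this kind of peeling induction; it goes instead through the $\C\Sym_s$-module structure of $W_s(\lambda)$, using the explicit bimodule description \eqnref{defn:w} of $I_s^t$ to determine directly which Specht modules $S(\mu)$ can occur as $\Sym_s$-constituents, from which $Y(\lambda)\subseteq Y(\mu)$ follows combinatorially.
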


\begin{cor}\label{composcrit}
Let $\mu,\lambda\in\Lambda_r$ with $|\mu|=s$. If $L_r(\mu)$
is a composition factor of $W_r(\lambda)$, then the
conditions (1) and (2) of Proposition \ref{dhwcrit}
hold.
\end{cor}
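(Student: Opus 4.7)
The plan is to reduce the statement directly to Proposition \ref{dhwcrit}, using Corollary \ref{cor:shift} to shift the strand count from $r$ down to $s=|\mu|$. First I would dispose of the trivial case $\mu=\lambda$: then $Y(\lambda)=Y(\mu)$, so condition (1) is automatic, and condition (2) reduces to $|\mu|-|\lambda|=0$ plus an empty sum, which holds.

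For $\mu \ne \lambda$, I would invoke Lemma \ref{strictineq}, which forces $|\mu|>|\lambda|$, i.e.\ $|\lambda|<s$. Combined with the parity constraint $|\lambda|\equiv r \equiv s \pmod 2$ (inherited from $\lambda,\mu\in\Lambda_r$), this gives $\lambda\in\Lambda_s$, so that the modules $W_s(\lambda)$ and $L_s(\mu)$ are defined. Corollary \ref{cor:shift} then asserts that $L_r(\mu)$ being a composition factor of $W_r(\lambda)$ is equivalent to $L_s(\mu)$ being a composition factor of $W_s(\lambda)$. Feeding this into Proposition \ref{dhwcrit} immediately produces conditions (1) and (2).

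No step is a genuine obstacle; the whole content of the corollary is to repackage Proposition \ref{dhwcrit} so that the hypothesis refers to $W_r(\lambda)$ rather than $W_s(\lambda)$, and Corollary \ref{cor:shift} is precisely the bridge between the two. The only mild care needed is in verifying that $\lambda\in\Lambda_s$ so that the shifted statement makes sense, and that is exactly what Lemma \ref{strictineq} supplies.
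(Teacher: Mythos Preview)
Your argument is correct and follows the same route as the paper, which simply states that the corollary is immediate from Corollary \ref{cor:shift}. Your extra care in separating off the case $\mu=\lambda$ and in checking $\lambda\in\Lambda_s$ via Lemma \ref{strictineq} just makes explicit what the paper leaves implicit; it is not a different method.
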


This is immediate from Corollary \ref{cor:shift}

\section{Proof of the main theorems.}

Recall that we have the surjective homomorphism (\ref{classeta})
$$
\eta:B_r(3)\lr E(r),
$$
where $E(r)$ is the semisimple $\C$-algebra $\End_{U(\fsl_2)}(V_1^{\otimes r})$.
Write $N=\Ker \eta$; then $\Phi\in N$, so that if $J$ is the two sided ideal
of $B_r(3)$ generated by $\Phi$, we have $J\subseteq N$.
Define the subsets $\Lambda_r^0,\Lambda_r^1$ of $\Lambda_r$ by
$\Lambda_r^0=\{(t), (t-1,1),1^3\mid 0\leq t\leq r;\;\;
t\equiv r(\text{mod $2$})\}$, and $\Lambda_r^1:=\Lambda\setminus\Lambda^0$.

\begin{prop}\label{ann-lambdas}
The following are equivalent conditions on an element $\lambda\in\Lambda_r$.
\begin{enumerate}
\item $\lambda\in\Lambda_r^0$.
\item $NL_r(\lambda)=0$.
\item $JL_r(\lambda)=0$.
\end{enumerate}
\end{prop}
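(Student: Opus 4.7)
The plan is to prove the cycle of implications $(2) \Rightarrow (3) \Rightarrow (1) \Rightarrow (2)$. The implication $(2) \Rightarrow (3)$ is immediate: since $\Phi \in N$ by \propref{braueractn}(2), we have $J \subseteq N$, so any simple annihilated by $N$ is annihilated by $J$.

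For $(1) \Leftrightarrow (2)$, my plan is to appeal to the classical Brauer--Schur--Weyl duality for $(B_r(3), O(3))$. Since $-I \in O(3)$ acts as the scalar $(-1)^r$ on $V_1^{\otimes r}$ and hence trivially on endomorphisms, we have $E(r) = \End_{O(3)}(V_1^{\otimes r})$. The classical theorem identifies the simples of $E(r)$ with partitions $\lambda \in \Lambda_r$ satisfying $\lambda_1' + \lambda_2' \leq 3$, and a direct case analysis (stratified by $\lambda_2' \in \{0, 1\}$, values $\lambda_2' \geq 2$ being excluded by the constraint) shows these are precisely the shapes $(t)$, $(t-1,1)$, and $(1,1,1)$, i.e., the set $\Lambda_r^0$. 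Pulling back to $B_r(3)$, one obtains $NL_r(\lambda) = 0 \Leftrightarrow \lambda \in \Lambda_r^0$.

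The substantive step is $(3) \Rightarrow (1)$, equivalently that $\Phi L_r(\lambda) \neq 0$ for every $\lambda \in \Lambda_r^1$. I would handle first the base case $r = s := |\lambda|$: here $W_r(\lambda) = L_r(\lambda) = S(\lambda)$ is the Specht module (simple in characteristic zero), each $e_i$ acts as zero, and $\Phi$ reduces to $-F = -(1 - s_1)(1 - s_3)$. A character computation gives $\mathrm{tr}_{S(\lambda)}(F) = \dim S(\lambda) - 2\chi_\lambda((12)) + \chi_\lambda((12)(34))$; combined with $F^2 = 4F$ (whose only eigenvalues are $0$ and $4$), positivity of this trace is equivalent to $F \neq 0$. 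By a Pieri-rule argument, the sign-sign isotypic component of $S(\lambda)|_{\Sym_{\{1,2\}} \times \Sym_{\{3,4\}}}$ is non-zero iff there is a chain of partitions $\nu \subseteq \mu \subseteq \lambda$ with $\mu/\nu$ and $\lambda/\mu$ both removable vertical strips of size $2$, and this is equivalent to $\lambda_1' + \lambda_2' \geq 4$, i.e., $\lambda \in \Lambda_s^1$.

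For the inductive step $s = |\lambda| < r$, I plan to use the exact functor $F : B_r\text{-mod} \to B_{r-2}\text{-mod}$ of \eqnref{defn:F,G}. The equality $F(W_r(\lambda)) = W_{r-2}(\lambda)$ from \cite[Prop.~5.3]{DHW} (invoked in the proof of \lemref{composn-trans}), combined with \corref{char-rad}, should yield $F(L_r(\lambda)) = L_{r-2}(\lambda)$. Since $\Phi \in B_4$ commutes with $e_{r-1}$ for $r \geq 6$, the $\Phi$-action on $F(L_r(\lambda)) = e_{r-1}L_r(\lambda) \subseteq L_r(\lambda)$ is the restriction of the action on $L_r(\lambda)$; by induction $\Phi$ acts non-trivially on $L_{r-2}(\lambda)$, hence on $L_r(\lambda)$. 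The main obstacle will be verifying $F(\mathrm{Rad}_r(\lambda)) = \mathrm{Rad}_{r-2}(\lambda)$: the inclusion $\subseteq$ follows quickly from \corref{char-rad} and $[e_{r-1}, B_{r-2}] = 0$, but the reverse --- lifting an element of $\mathrm{Rad}_{r-2}(\lambda) \subseteq e_{r-1}W_r(\lambda)$ to an element of $W_r(\lambda)$ annihilated by the larger ideal $B_r^t$ --- requires careful use of the diagrammatic structure of the bimodule $I_r^t$.
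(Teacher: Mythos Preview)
Your proposal is correct, but it diverges from the paper's (sketched) argument in two places, and your stated ``main obstacle'' is in fact not needed.

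\textbf{Comparison with the paper.} The paper's sketch first establishes $(3)\Rightarrow(1)$ by citing explicit computations from \cite{LZ2} for $r=4,5$, then deduces $(2)\Rightarrow(1)$ (since $J\subseteq N$), and finally obtains $(1)\Rightarrow(2)$ by a pure counting argument: $E(r)$ has exactly $r+1$ simple components, and $|\Lambda_r^0|=r+1$, so the $\lambda$ for which $N L_r(\lambda)=0$ must be precisely $\Lambda_r^0$. Your route is different in both non-trivial steps. For $(1)\Leftrightarrow(2)$ you invoke the full classical Brauer--Schur--Weyl labelling (simples of $E(r)$ indexed by $\lambda$ with $\lambda_1'+\lambda_2'\le 3$); this is correct and gives an independent proof of that equivalence, at the cost of importing a stronger external statement than the paper uses. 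For $(3)\Rightarrow(1)$ your base case $r=|\lambda|$ (where all $e_i$ vanish on $S(\lambda)$, so $\Phi$ acts as $-F=-(1-s_1)(1-s_3)$, and $F\ne 0$ on $S(\lambda)$ iff $\lambda_1'+\lambda_2'\ge 4$) together with induction via the exact functor $\mathcal F=e_{r-1}(-)$ is a clean self-contained argument, using machinery already developed in \S\ref{sect:brauer}.

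\textbf{Your obstacle is illusory.} You only need the inclusion $\mathcal F(\Rad_r(\lambda))\subseteq\Rad_{r-2}(\lambda)$, not equality. Applying the exact functor $\mathcal F$ to $0\to\Rad_r(\lambda)\to W_r(\lambda)\to L_r(\lambda)\to 0$ and using $\mathcal F(W_r(\lambda))\cong W_{r-2}(\lambda)$ gives $\mathcal F(L_r(\lambda))\cong W_{r-2}(\lambda)/\mathcal F(\Rad_r(\lambda))$, which therefore \emph{surjects} onto $L_{r-2}(\lambda)$. Since $\Phi\in B_4$ commutes with $e_{r-1}$ for $r\ge 6$, the action of $\Phi$ on $L_r(\lambda)$ restricts to its action on $e_{r-1}L_r(\lambda)=\mathcal F(L_r(\lambda))$; if by induction $\Phi$ is non-zero on the quotient $L_{r-2}(\lambda)$, it is non-zero on $\mathcal F(L_r(\lambda))$, hence on $L_r(\lambda)$. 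The forward inclusion itself follows exactly as in the proof of \lemref{composn-trans}: if $w\in\Rad_r(\lambda)=\Ann_{W_r(\lambda)}(B_r^t)$ then $e_{t+1}\cdots e_{r-1}w=0$, whence $e_{t+1}\cdots e_{r-3}\cdot(e_{r-1}w)=0$, and since $e_{t+1}\cdots e_{r-3}$ generates $B_{r-2}^t$ this gives $e_{r-1}w\in\Rad_{r-2}(\lambda)$. (Note also that every $\lambda\in\Lambda_r^1$ has $|\lambda|\ge 4$, so your base case genuinely covers $r=4,5$ and the constraint $r\ge 6$ in the inductive step is harmless.)
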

\begin{proof}[Sketch of Proof.]
This statement is contained in \cite[Thm. 6.8]{LZ2}; we give a brief sketch
of the argument for the reader's convenience. One first shows that if $\lambda
\notin\Lambda_r^0$ (i.e. $\lambda\in\Lambda_r^1$) then $\Phi L_r(\lambda)\neq 0$;
this involves explicit computation, and a knowledge of the cases $r=4,5$.
It follows that $N$ acts non-trivially on $L_r(\lambda)$ for $\lambda\in\Lambda_r^1$,
since $\Phi\in N$.

But the semisimple algebra $E(r)$ has $r+1$ simple components, whence $N$ acts as
zero on precisely $r+1$ of the  modules $L_r(\lambda)$. Since $|\Lambda_r^0|=r+1$,
it follows that $N$ acts trivially on $L_r(\lambda)$ for $\lambda\in \Lambda_r^0$.
The statement is now clear.
\end{proof}

\begin{cor}\label{redn:rad}
Let $\CR$ be the radical of $B_r$. Then $N=J$ if and only if $J$ contains $\CR$.
\end{cor}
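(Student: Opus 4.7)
The plan is to prove the two directions separately, leveraging Proposition \ref{ann-lambdas}, which identifies the simple $B_r$-modules annihilated by $J$ and by $N$ as the same set, namely $\{L_r(\lambda)\mid \lambda\in\Lambda_r^0\}$.

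The forward direction is immediate: if $N=J$, then because $\eta$ induces an isomorphism $B_r/N\cong E(r)$ and $E(r)=\End_{U(\fsl_2)}(V_1^{\otimes r})$ is a semisimple $\C$-algebra, $N$ must contain the Jacobson radical $\CR$, and hence so does $J$.

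For the reverse direction, I would assume $J\supseteq\CR$ and aim to show $N\subseteq J$ (the opposite inclusion $J\subseteq N$ being built into the construction, since $\Phi\in N$). The strategy is to pass to the semisimple quotient $B_r/\CR$, in which both $J/\CR$ and $N/\CR$ become two-sided ideals. By Wedderburn's theorem, $B_r/\CR\cong\bigoplus_{\lambda\in\Lambda_r}\End_\C(L_r(\lambda))$, and any two-sided ideal of such a finite product of matrix algebras is a direct sum of some subset of the matrix blocks; equivalently, such an ideal is uniquely determined by the set of simple modules it annihilates. By Proposition \ref{ann-lambdas}, $J$ and $N$ annihilate the same simples (those indexed by $\Lambda_r^0$), so $J/\CR$ and $N/\CR$ correspond to the same collection of blocks, giving $J/\CR=N/\CR$ and hence $J=N$.

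There is no serious obstacle here: the substantive content sits entirely in Proposition \ref{ann-lambdas}, whose proof relied on explicit computation for small $r$ together with a dimension count matching $|\Lambda_r^0|=r+1$ against the number of simple components of $E(r)$. The present corollary is essentially the translation of that proposition into the language of the radical, using the elementary fact that two-sided ideals in a finite-dimensional semisimple algebra are classified by the simples they kill.
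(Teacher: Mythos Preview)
Your proof is correct and follows essentially the same approach as the paper. The paper's argument is more compressed: it observes that by Proposition~\ref{ann-lambdas}, $B_r/N$ and $B_r/J$ have the same maximal semisimple quotient, whence $N=J+\CR$, from which the equivalence is immediate; your version unpacks this by working explicitly in $B_r/\CR$ and invoking the classification of two-sided ideals in a semisimple algebra by the simples they annihilate.
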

\begin{proof} We have $E(r)\cong B_r/N$, and
by Prop. \ref{ann-lambdas}, $B_r/N$ and $B_r/J$ have the same maximal semisimple
quotient (viz. $E(r)$). Hence $N=J+\CR$.
\end{proof}

\begin{proof}[Proof of Theorem \ref{mainq}.]

It remains only to show that $J\supseteq \CR$. Since the element $\Phi$ satisfies
$\Phi^*=\Phi$, we may apply Theorem \ref{thm:rad} above, which implies that it
suffices to show that for $\lambda\in\Lambda_r^0$, $JW_r(\lambda)=\Rad_r(\lambda)$.
Note that by Prop. \ref{ann-lambdas}, we have for $\lambda\in\Lambda_r^0$,
$JL_r(\lambda)=0$, whence $JW_r(\lambda)\subseteq\Rad_r(\lambda)$. We shall
prove that we have equality by showing, for $\lambda\in\Lambda_r^0$,
\be\label{crux}
{\text{If $L_r(\mu)$ is a composition factor of $W_r(\lambda)$ and
$\mu\neq\lambda$, then }}\mu\in\Lambda_r^1.
\ee

Given (\ref{crux}), it follows that for $\lambda\in\Lambda_r^0$, all composition
factors $L_r(\mu)$ of $\Rad_r(\lambda)$ satisfy $\mu\in\Lambda_r^1$, whence
$JL_r(\mu)=L_r(\mu)$. It follows that $J\Rad_r(\lambda)=\Rad_r(\lambda)$, whence by
Theorem \ref{thm:rad}, $J\supseteq \CR$.

It therefore remains only to prove (\ref{crux}). We do this by invoking the
criteria in Prop. \ref{dhwcrit} for $L_r(\mu)$ to be a composition factor
of $W_r(\lambda)$. There are three cases.

First, if $\lambda=(t)$ and $\mu\in\Lambda_r^0$ with $Y(\mu)\supset
Y(\lambda)$, then there are three possibilities for $\mu$: (a)
$\mu=(s)$, $s>t$; (b) $\mu=(s-1,1)$, $s>t$; (c) $t=1$ and
$\mu=(1^3)$. In cases (a) and (b), $|\mu|-|\lambda|+\sum_{P\in
Y(\mu)\setminus Y(\lambda)}c(P)>0$, while in case (c),
$|\mu|-|\lambda|+\sum_{P\in Y(\mu)\setminus Y(\lambda)}c(P)=-1\neq
0$.

Secondly, if $\lambda=(t-1,1)$, the only possibility for
$\mu\in\Lambda_r^1$ with $Y(\mu)\supset Y(\lambda)$ is $\mu=(s-1,1)$
with $s>t$. In this case, again $|\mu|-|\lambda|+\sum_{P\in
Y(\mu)\setminus Y(\lambda)}c(P)>0$.

Finally, if $\lambda=(1^3)$, there is no $\mu\in\Lambda_r^0$ with $Y(\mu)\supset
Y(\lambda)$.

This proves Theorem \ref{mainclass}, and hence by Prop. \ref{redn:class}
completes the proof of Theorem \ref{mainq}.
\end{proof}

\section{An integral form for the endomorphism algebra.}
Let $\CA$ be the ring $\C[q^{\pm 1}]$,
Lusztig \cite{L1,L2}
has defined an $\CA$-form $\cU_\CA$ of the quantised
universal enveloping algebra $\cU_q(\fg)$ of a Lie algebra $\fg$, which involves the
divided powers of the generators $e_i,f_i$ of $\cU_q$. He also showed
how to construct $\CA$-forms of higest weight modules for $\cU_\CA$
by applying these divided powers to highest weight vectors of the
corresponding $\cU_q$-modules. Define $\phi_1:\CA\to\C$
by $\phi_1(q)=1$ and let $\phi_q:\CA\to\CK$ be the inclusion map.

We shall define an $\CA$-form of the
$\CK$-algebra $E_r(2,q)\cong BMW_r(\CK)/\langle \Phi_q\rangle$
which specialises to the endomorphism algebras $E_r(2)$ and $E_r(2,q)$
respectively when we map $\CA$ to $\C$ and to $\CK$ via $\phi_1$ and $\phi_q$
respectively.

The definition is as follows. Let $BMW_r(\CA)$ be the $\CA$-algebra generated
by the set $\{g_1^{\pm 1},\dots g_{r-1}^{\pm 1};e_1,\dots,e_{r-1}\}$
subject to the relations
(\ref{braidgiq}). It is explained in \cite[p.285]{X} that $BMW_r(\CA)$ is
free as an $\CA$-module, with basis a set of ``tangle diagrams''.

Let $a,b,c$ and $d$ be the elements of $\CK$ defined in (\ref{abcd}),
and write $\tilde a=(q^2+q^{-2})a$, $\tilde b=(q^2+q^{-2})b$,
$\tilde c=(q^2+q^{-2})c$ and $\tilde d=(q^2+q^{-2})d$.
Then $\tilde a,\tilde b,\tilde c,\tilde d\in\CA$,
and we define
$\tilde \Phi_q$ as
$\tilde \Phi_q=(q^2+q^{-2})\Phi_q$, i.e. (cf. (\ref{defPhiq})
$$
\tilde\Phi_q =\tilde aF_q e_2 F_q - \tilde b F_q -
\tilde c F_qe_2e_{14}F_q +\tilde d F_q e_{1 2 3 4} F_q.
$$

Then $\tilde\Phi_q\in BMW_r(\CA)$.

\begin{definition}\label{def:er} Define the $\CA$-algebra $\CE_r(\CA)$ by
$\CE_r(\CA):=BMW_r(\CA)/\langle \tilde\Phi_q \rangle$.
\end{definition}

\begin{prop}\label{specend}
Let $\CE_r(\CA)$ be the $\CA$-algebra of (\ref{def:er}), and let $\phi_1:\CA\to\C$
and $\phi_q:\CA\to\CK$ be the homomorphisms defined above, which make $\C$ and $\CK$
into $\CA$-modules. Then we have isomorphisms
$$
\CE_r(\CA)\otimes_{\CA}\C\overset{\simeq}{\lr}\End_{\cU(\fsl_2)}V_1^{\otimes r},
$$
and
$$
\CE_r(\CA)\otimes_{\CA}\CK\overset{\simeq}{\lr}\End_{\cU_q(\fsl_2)}V_q^{\otimes r},
$$
where $V_1$ is the irreducible $\fsl_2(\C)$ module of highest weight $2$ (dimension $3$),
and $V_q$ is its $\cU_q$-analogue.
\end{prop}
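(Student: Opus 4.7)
The plan is to deduce both isomorphisms from the defining short exact sequence
\[
0 \lr \langle \tilde\Phi_q\rangle \lr BMW_r(\CA) \lr \CE_r(\CA) \lr 0
\]
by applying the right exact functors $-\otimes_\CA \C$ and $-\otimes_\CA \CK$, and then invoking Theorems \ref{mainq} and \ref{mainclass}. The first ingredient is the identification of the specialised BMW algebras: Xi's integral basis theorem (cited from \cite{X}) asserts that $BMW_r(\CA)$ is a free $\CA$-module on a set of tangle diagrams, and the same set of diagrams provides bases for $BMW_r(\CK)$ and for $B_r(3)$. Together with the observation that formal consequences of (\ref{braidgiq}) such as $e_i^2=(q^2+1+q^{-2})e_i$ hold already in $BMW_r(\CA)$ and specialise correctly at $\phi_q$ and $\phi_1$, this yields canonical $\CA$-algebra isomorphisms $BMW_r(\CA)\otimes_\CA\CK\cong BMW_r(\CK)$ and $BMW_r(\CA)\otimes_\CA\C\cong B_r(3)$.

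Next I would analyse the specialisation of the ideal. By right exactness of $-\otimes_\CA S$ for $S\in\{\C,\CK\}$ one obtains
\[
\CE_r(\CA)\otimes_\CA S \;\cong\; (BMW_r(\CA)\otimes_\CA S)\big/\langle 1\otimes\tilde\Phi_q\rangle.
\]
Since $\tilde\Phi_q=(q^2+q^{-2})\Phi_q$ and $q^2+q^{-2}$ is a unit in $\CK$ and in $\C$ (where it specialises to $2$), the two-sided ideal generated by $1\otimes\tilde\Phi_q$ coincides, under the preceding identifications, with $\langle\Phi_q\rangle\subset BMW_r(\CK)$ when $S=\CK$ and with $\langle\Phi\rangle\subset B_r(3)$ when $S=\C$. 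Theorems \ref{mainq} and \ref{mainclass} then identify the respective quotients $BMW_r(\CK)/\langle\Phi_q\rangle$ and $B_r(3)/\langle\Phi\rangle$ with $\End_{\cU_q}(V_q^{\otimes r})$ and $\End_U(V_1^{\otimes r})$, which gives the desired isomorphisms.

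The main obstacle is the integrality statement $BMW_r(\CA)\otimes_\CA\C\cong B_r(3)$, which depends on having the tangle diagram basis defined genuinely over $\CA=\C[q^{\pm 1}]$ rather than over the localised ring $\CA_q$ used earlier in the paper. One must also verify that the derivation of the auxiliary relations (\ref{extrarel}) from (\ref{braidgiq}) is valid over $\C[q^{\pm 1}]$, which it is, being purely polynomial in $q^{\pm 1}$, so that these consequences persist under base change. Once these ingredients are granted, the rest of the argument is formal manipulation of right exact functors and principal two-sided ideals.
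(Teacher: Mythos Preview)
Your argument is correct and follows the same overall strategy as the paper: start from the defining short exact sequence for $\CE_r(\CA)$, apply the right exact functor $-\otimes_\CA S$, identify $BMW_r(\CA)\otimes_\CA S$ with $BMW_r(\CK)$ or $B_r(3)$ via Xi's tangle basis, and then invoke Theorems~\ref{mainq} and~\ref{mainclass}.

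There is one minor difference in the endgame worth noting. You compute directly that the image of $\langle\tilde\Phi_q\rangle\otimes_\CA S$ in $BMW_r(\CA)\otimes_\CA S$ is the two-sided ideal generated by the image of $\tilde\Phi_q$, and then use that $q^2+q^{-2}$ becomes a unit in $S$ to identify this with $\langle\Phi_q\rangle$ (respectively $\langle\Phi\rangle$); the main theorems then give the isomorphism immediately. The paper instead only observes that the kernel of $BMW_r(\CK)\to\CE_r(\CA)\otimes_\CA\CK$ \emph{contains} $\Phi_q$, and obtains the reverse inclusion by passing through the action of $\CE_r(\CA)$ on Lusztig's integral form $V_\CA^{\otimes r}$: since $\tilde\Phi_q$ acts trivially there, one gets an integral map $\CE_r(\CA)\to\End_{\cU_\CA}(V_\CA^{\otimes r})$ whose kernel is, by Theorem~\ref{mainq}, $\CA$-torsion, hence killed upon tensoring with $\CK$. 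Your route is more elementary and avoids any appeal to the integral form of the representation; the paper's route has the incidental benefit of exhibiting an integral map to the endomorphism ring with torsion kernel, which may be of independent interest for root-of-unity questions. Both are valid.
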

\begin{proof}
We prove the second statement. the proof of the first is similar.
We have an exact sequence of $\CA$-algebras
\be
0\lr \langle\tilde\Phi_q\rangle\lr BMW_r(\CA)\lr\CE_r(\CA)\lr 0.
\ee
Moreover we know that $BMW_r(\CA)$ is free as $\CA$-module, whence
so is the kernel $\langle\tilde\Phi_q\rangle$.
Applying the functor $-\otimes_{\CA}\CK$, which by (cf. \cite[Cor 4.4, p. 143]{HWu})
is right exact, we obtain an exact sequence
\be
BMW_r(\CK)\lr\CE_r(\CA)\otimes_{\CA}\CK\lr 0,
\ee
and the kernel of the first map contains $\Phi_q$.


Now since $\tilde\Phi_q$ acts trivially on Lusztig's $\CA$-form
of $V^{\otimes r}$, it follows that there is a homomorphism
$\CE_r(\CA)\lr\End_{\cU_\CA(\fsl_2)}(V_\CA^{\otimes r})$, whose image
is the algebra of endomorphisms which are generated by the $R$-matrices,
and whose kernel is, by Theorem \ref{mainq}, a torsion submodule.
The statement follows.
\end{proof}

\section{Interpretation in terms of orthogonal Lie algebras.}

It is well known that the Lie algebras $\fsl_2(\C)$ and
$\fso_3(\C)$ are abstractly isomorphic. In this section we shall
make explicit the interpretation of our main theorem in terms of $\fso_3$.
We shall discuss the classical ($q=1$) situation; the quantum
case may be discussed entirely similarly.

The three-dimensional representation of $\fsl_2(\C)$ may be realised
as follows. Let $V=\C^3$ be the subspace of the polynomial ring
$\C[x,y]$ with basis $x^2,xy,y^2$. Then $\fsl_2(\C)$ acts via
$e=x\frac{\partial}{\partial y}$, $f=y\frac{\partial}{\partial x}$
and $h=[e,f]=x\frac{\partial}{\partial x}-y\frac{\partial}{\partial y}$.
With respect to the given basis, $e,f$ and $h$ are represented by the
matrices
$\left[\begin{smallmatrix}0&1&0\\0&0&2\\0 &0 & 0\\ \end{smallmatrix}
\right]$,
$\left[\begin{smallmatrix}0 & 0 & 0\\2 & 0 & 0\\0 & 1 & 0\\ \end{smallmatrix}
\right]$ and
$\left[\begin{smallmatrix}2 &0 & 0\\0&0 &0\\0&0&-2\\ \end{smallmatrix}
\right]$.

Now $\fso_3(\C)$ may be identified with the space of skew symmetric
matrices. If we write $(a,b,c)$ for the matrix
$\left[\begin{smallmatrix}0&-a &-b\\a& 0 &-c\\b&c&0\\ \end{smallmatrix}
\right]$, the Lie product $[(a_1,a_2,a_3), (b_1,b_2,b_3)]=(c_1,c_2,c_3)$,
where $c_i=(-1)^{i-1}\det M_i$, where $M_i$ is the $2\times 2$ matrix
obtained from
$\left[\begin{smallmatrix}a_1&a_2&a_3\\b_1&b_2&b_3\\ \end{smallmatrix}
\right]$ by deleting the $i^{\text{th}}$ column.
Writing $E=(i,0,1),F=(i,0,-1)$ and $H=(0,2i,0)$, where $i=\sqrt -1$,
it is easily verified that $E,F$ and $H$ satisfy the $\fsl_2$ relations,
and that
$T=\left[\begin{smallmatrix}1&0&1\\0&i&0\\i&0&-i\\
\end{smallmatrix}\right]$
conjugates $e,f,h$ into $E,F,H$ respectively, i.e. that $TeT\inv=E$ etc.

Inverting $T$, it follows that with respect to the basis
$w_1=x^2+y^2$, $w_2=-2ixy$, $w_3=-ix^2+iy^2$, $e,f$ and $h$ have
skew symmetric matrices. This identifies the 3-dimensional
representation of $\fsl_2(\C)$ explicitly with the natural
representation of $\fso_3(\C)$. The quantum case is similar.

It follows that our main theorems (\ref{mainclass} and \ref{mainq})
may be stated in terms of the natural representation of
$\fso_3$. Now the natural representation $V_n$ of
quantum $\fso_{2n+1}$ shares
with the case $n=1$ the properties
that it is strongly multiplicity free,
and that the surjection $\C(q)\cB_r
\to\End_{\cU_q(\fso_{2n+1})}(V_n^{\otimes r})$, where $\cB_r$ is the $r$-string
braid group, factors through a specialisation of the
algebra $BMW_r(y,z)$. It may be reasonable to speculate that
a result similar to ours holds in this generality.

\end{document}